\newcommand{\proofpart}[2]{%
  \par
  \addvspace{\medskipamount}%
  \noindent\emph{Step #1: #2}\par\nobreak
  \addvspace{\smallskipamount}%
  \@afterheading
}
\DeclarePairedDelimiter\abs{\lvert}{\rvert}%
\DeclarePairedDelimiter\norm{\lVert}{\rVert}%
\let\oldabs\abs
\def\abs{\@ifstar{\oldabs}{\oldabs*}}
\let\oldnorm\norm
\def\norm{\@ifstar{\oldnorm}{\oldnorm*}}
\g@addto@macro\bfseries{\boldmath}
\newcommand{\Dy}{\mathcal{D}}
\newcommand{\T}{\partial\mathbb{D}}
\newcommand{\Ka}{\mathcal{K}}
\newcommand{\conj}[1]{\overline{#1}}
\newcommand{\D}{\mathbb{D}}
\renewcommand{\Dy}{\mathcal{D}}
\newtheorem{thm}{Theorem}[section]
\newtheorem{lemma}[thm]{Lemma}
\theoremstyle{definition}
\theoremstyle{definition}
\newcommand{\Addresses}{{
		\bigskip
		\footnotesize
         Linus Bergqvist, \\ \textsc{Stockholm, Sweden} \\
        \texttt{linus.lidman.bergqvist@gmail.com}

        \medskip
		
		Adem Limani, \\ \textsc{Centre for Mathematical Sciences, \\ Lund University, Sweden }\\
		\texttt{adem.limani@math.lu.se}
		
		\medskip
		Bartosz Malman, \\ \textsc{Division of Mathematics and Physics, \\ M\"alardalen University, V\"aster\aa s, Sweden} \\
		\texttt{bartosz.malman@mdu.se}

	}}
\begin{document}
\title{\textbf{Revisiting cyclic elements in growth spaces}}
\maketitle
\authors{\qquad \qquad \qquad \qquad Linus Bergqvist,  Adem Limani \& Bartosz Malman} 

\date{\today}

\begin{abstract}
We revisit the problem of characterizing cyclic elements for the shift operator in a broad class of radial growth spaces of holomorphic functions on the unit disk, focusing on functions of finite Nevanlinna characteristic. We provide results in the range of Dini regular weights, and in the regime of logarithmic integral divergence. Our proofs are largely constructive, enabling us to simplify and extend a classical result by Korenblum and Roberts, and a recent Theorem due to El-Fallah, Kellay, and Seip.

    
\end{abstract}
\section{Introduction}
\subsection{Cyclic Nevanlinna functions in growth spaces}
Let $W:(0,1] \to (0, 1]$ be a continuous non-decreasing weight (positive function) with $\lim_{t\to 0+} W(t)=0$. We denote by $A^p(W)$ the space of holomorphic functions $f$ in the unit-disc $\D$ equipped with the metric
\[
\norm{f}_{A^p(W)}= \left(\int_{\D} \abs{f(z)}^p W(1-|z|) dA(z) \right)^{\min(1,1/p)} < \infty .
\]
Since the weight $W$ is radial, it is well-known that the polynomials form a dense subset in $A^p(W)$ (for instance, see Proposition 3.1 in \cite{aleman2022backward} for a neat proof). We shall also consider the weighted growth space $A^\infty(W)$ consisting of holomorphic functions $f$ in $\D$ satisfying
\[
\lim_{\abs{z} \to 1-} W(1-|z|) \abs{f(z)} =0.
\]
Equipped with the norm
\[
\norm{f}_{A^\infty(W)} := \sup_{z\in \D} W(1-|z|) \abs{f(z)} < \infty,
\]
it becomes a separable Banach space, containing the polynomials as a dense subset. Let $\mathcal{N}$ denote the Nevanlinna class, which consists of holomorphic functions in $\D$ having finite Nevanlinna characteristic: 
\[
\sup_{0<r<1} \int_{\T} \log(1+ \abs{f(r\zeta)} ) dm(\zeta)< \infty,
\]
where $dm$ denotes the unit-normalized Lebesgue measure on the unit-circle $\T$. Given a Nevanlinna class function $f$ on $\D$, we consider the classical problem of when the set
\[
\{ f(z) z^n: \, n=0,1,2,\dots \}, \qquad z\in \D
\]
forms a dense linear span in $A^p(W)$. Such functions $f$ are said to be \emph{cyclic} in $A^p(W)$ (with respect to the shift operator $M_zf(z) = zf(z)$). Questions of this type originate back to the work of Keldysh in \cite{Keldysh} and to Beurling \cite{beurling1964critical}. Since the topologies in $A^p(W)$ induce uniform convergence on compact subsets of $\D$, cyclic functions $f$ in $A^p(W)$ can certainly not have any zeros in $\D$. The classical Nevanlinna representation allows one to express any zero-free function $f \in \mathcal{N}$ as
\[
f(z) = \exp \left( \int_{\T} \frac{\zeta +z}{\zeta -z} d\mu_f(\zeta) \right), \qquad z\in \D,
\]
where $\mu_f$ is a finite real-valued Borel measure on $\T$. In fact, a more refined Lebesgue and Jordan decomposition, in conjunction with standard properties of Poisson kernels, implies that
\begin{equation*}\label{EQ:LEBDEC}
d\mu_f = \log |f| dm + d\nu_f - d\sigma_f
\end{equation*}
where $\nu_f, \sigma_f$ are mutually singular positive finite Borel measures on $\T$, both singular wrt $dm$. We shall often refer to $\sigma_f$ as the associated negative singular part of $f$ (instead of $\mu_f$). This gives the refined Inner-outer factorization of $f$, defined by
\[
f(z) = \mathcal{O}_f(z) \Theta_{\sigma_f}(z) / \Theta_{\nu_f}(z), \qquad z\in \D,
\]
where $\mathcal{O}_f$ denotes the so-called outer factor of $f$, and $\Theta_{\sigma_f}, \Theta_{\nu_f}$ are singular inner functions. For a detailed treatment of Nevanlinna factorization and Hardy spaces, we refer the reader to the excellent book \cite{garnett}. In what follows, we shall solely restrict our attention to continuous non-decreasing weights $W$, which satisfy the following additional weak regularity condition:
\begin{equation}\label{EQ:LogwDouble}
\log \frac{1}{W(t/2)} \leq C \log \frac{1}{W(t)}, \qquad \text{for some} \, \, \, C>1. 
\end{equation}
From now and onward, we shall refer to such weights as \emph{good weights}. We will use the notation \( A \lesssim B \) to indicate that \( A \leq c B \) for some absolute constant \( c > 0 \). When both \( A \lesssim B \) and \( B \lesssim A \) hold, we simply write \( A \asymp B \). Occasionally, absolute constants may appear when carrying out estimates, but the reader should note that these constants may vary from line to line.

\subsection{Dini-regular weights}
In this section, we shall restrict our attention to weights $W$ which tend to zero in a slightly slower fashion. More precisely, we shall assume that there exists a constant $C>0$, such that
\begin{equation}\label{EQ:LogDiniReg}
    \int_{0}^x \log \frac{1}{W(t)} dt  \leq C x \log \frac{1}{W(x)}, \qquad 0<x<1.
\end{equation}
 In this regime, it turns out that there are zero-free holomorphic self-maps $f$ on $\D$ which are not cyclic in $A^p(W)$. Results of this kind were initially proved by Korenblum in \cite{korenblum1977beurling} and independently by Roberts in \cite{roberts1985cyclic}. For a certain range of weights $W$, their results assert that the cyclicity of $f$ in $A^p(W)$ is entirely contingent upon whether the associated Nevanlinna measure $\mu_f$ assigns any mass to certain exceptional $W$-sets on $\T$. Below, we clarify these points. Throughout, we let $\kappa_W$ be the associated gauge-function with respect to $W$, defined by
\[
\kappa_W(t) = t \log \frac{1}{W(t)}, \qquad 0<t<1.
\]
A compact set $K\subset \T$ of Lebesgue measure zero is said to have finite $\kappa_W$-entropy if
\[
\sum_k \kappa_W (\ell_k) <\infty.
\]
where $(\ell_k)_k$ are the lengths of the connected components $(I_k)_k$ of $\T \setminus K$. When $W(t)=t^{\alpha}$ some $\alpha>0$, such sets are typically referred to as Beurling-Carleson sets, and they play a crucial role in function theory. For instance, they precisely classify all zero sets on $\T$ of holomorphic functions in $\D$ which are smooth up to $\T$ (see \cite{taylor1970ideals}). 
We remark that the condition \eqref{EQ:LogwDouble} is equivalent to the doubling property of the gauge function $\kappa_W(t/2) \asymp \kappa_W(t)$, while \eqref{EQ:LogDiniReg} is typically referred to as $\kappa_W$ being \emph{Dini-regular}. Our main intention is to prove the following generalization of the Korenblum-Roberts Theorem.

\begin{thm}\thlabel{THM:CYCSlow} Let $0<p \leq \infty$ and $W$ be a good weight which satisfies the condition \eqref{EQ:LogDiniReg}. Then a function $f\in A^p(W) \cap \mathcal{N}$ is cyclic in $A^p(W)$ if $\sigma_f(K) = 0$ for all sets $K\subset \T$ of finite $\kappa_W$-entropy.

\end{thm}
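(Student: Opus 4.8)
The plan is to reduce the cyclicity of a general Nevanlinna function $f \in A^p(W)$ to the cyclicity of its singular inner pieces, and then to attack the singular inner function via an explicit approximation scheme. First I would observe that if $f = \mathcal{O}_f \Theta_{\sigma_f}/\Theta_{\nu_f}$, then since $1/\Theta_{\nu_f}$ expands the function, multiplying by it only helps; more precisely, $\mathcal{O}_f \Theta_{\sigma_f}$ divides $f$ in the sense that $f = (\mathcal{O}_f \Theta_{\sigma_f}) \cdot (1/\Theta_{\nu_f})$ with $1/\Theta_{\nu_f} \in \hd$, so any polynomial multiple of $\mathcal{O}_f\Theta_{\sigma_f}$ is a limit of polynomial multiples of $f$ only after care — instead the cleaner route is: it suffices to prove that $\mathcal{O}_f \Theta_{\sigma_f}$ is cyclic, since then the closed invariant subspace generated by $f$ contains $f \Theta_{\nu_f} = \mathcal{O}_f\Theta_{\sigma_f}$ (one approximates $\Theta_{\nu_f}$ by polynomials in the topology that makes multiplication by the bounded function $f$ continuous on the relevant space — here one uses that $\Theta_{\nu_f}$ is a bounded multiplier and polynomials approximate it weak-star/in the $A^p(W)$ pairing), and conversely $\mathcal{O}_f\Theta_{\sigma_f}$ cyclic forces $1 \in [\mathcal{O}_f\Theta_{\sigma_f}] \subseteq [f]$. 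Next, a standard outer-function argument (outer functions with suitable growth are cyclic, or rather: $\mathcal{O}_f$ can be absorbed since $[\mathcal{O}_f \Theta_{\sigma_f}] = [\Theta_{\sigma_f}]$ because $\mathcal{O}_f$ and $1/\mathcal{O}_f$ can both be approximated appropriately after truncating the outer function to stay in the space) reduces the problem to showing: \emph{every singular inner function $\Theta_\sigma$ whose measure $\sigma$ charges no set of finite $\kappa_W$-entropy is cyclic in $A^p(W)$.}

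The core of the proof is then the Korenblum–Roberts-style decomposition of $\sigma$. Given $\sigma \perp dm$ with $\sigma(K) = 0$ for every finite $\kappa_W$-entropy set $K$, I would decompose $\sigma = \sum_j \sigma_j$ into countably many pieces, each of small total mass, arranged so that the "premeasure"/entropy bookkeeping works: the key classical fact (Roberts' decomposition) is that for any $\varepsilon > 0$ one can write $\sigma$ as a sum of singular measures $\sigma_j$ such that each $\Theta_{\sigma_j}$ admits an approximation $\Theta_{\sigma_j} \approx g_j$ where $g_j$ is bounded, bounded away from zero, and $\|1 - g_j/\Theta_{\sigma_j}\|$ is controlled — the point being that a singular measure with no mass on entropy sets can be "spread out" at scale determined by $\kappa_W$. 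Concretely, for each dyadic generation one groups the mass of $\sigma$ on arcs, and uses the hypothesis $\sigma(K)=0$ together with a Frostman/maximal-function argument to show the mass on arcs where $\kappa_W$ is "expensive" is negligible; one builds a function $h_j$ harmonic with boundary data a smoothed multiple of $\sigma_j$'s density so that $e^{-h_j}$ is comparable to $|\Theta_{\sigma_j}|$ and $h_j$ does not grow too fast, using the Dini-regularity \eqref{EQ:LogDiniReg} to convert the entropy sum into a genuine bound on $\int \kappa_W$. This yields $e^{-h_j} \in A^p(W)$ with norm control, and then $\Theta_{\sigma_j} \cdot e^{h_j} \to$ (something bounded) allows one to multiply back and forth.

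The mechanism that finishes it: writing $\Theta_\sigma = \prod_j \Theta_{\sigma_j}$, one shows $\Theta_\sigma$ is cyclic by a "telescoping" argument — $[\Theta_\sigma] = [\Theta_{\sigma_1}\cdots]$, and using that each partial product times an explicit outer function lies in $A^p(W)$ and converges, one gets $1 \in [\Theta_\sigma]$ by showing that $\Theta_\sigma \cdot F_N \to 1$ weakly (or in norm) where $F_N$ is an outer function built to cancel $N$ of the factors while keeping the tail small; here the "largely constructive" nature the abstract promises means $F_N$ is given by an explicit exponential of a Poisson integral. I expect the main obstacle to be precisely the quantitative decomposition of $\sigma$: one must choose the scales $\ell_k$ at which to chop the measure so that simultaneously (i) the entropy-type sum $\sum_k \kappa_W(\ell_k)$ over the "bad" arcs is small — this is where $\sigma(K)=0$ for all finite-entropy $K$ is used, via a covering/exhaustion argument producing a finite-entropy set capturing most of $\sigma$ and deriving a contradiction unless the bad part is small — and (ii) the resulting smoothed potentials $h_j$ satisfy the growth bound forcing $e^{-h_j} \in A^p(W)$, which is exactly where \eqref{EQ:LogDiniReg} (Dini-regularity of $\kappa_W$) enters to trade the discrete entropy sum for the integral $\int_0^x \log(1/W)$. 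Balancing these two requirements against each other, uniformly in $j$, is the technical heart; the passage from $p = \infty$ to general $0 < p \le \infty$ and the reduction steps are comparatively routine once the singular-inner case is in hand.
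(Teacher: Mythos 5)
Your proposal is correct and follows essentially the same route as the paper: the reduction to the singular inner factor via cyclicity of bounded outer functions is exactly the content of \thref{LEM:fH} and \thref{LEM:CycInner}, and your core argument --- a $\kappa_W$-adapted Roberts decomposition of $\sigma_f$ (with the finite-entropy hypothesis forcing the residual piece to vanish), followed by explicit outer functions given as exponentials of Herglotz/Poisson integrals of box-smoothed densities, with the Dini-regularity \eqref{EQ:LogDiniReg} controlling the tail sums and a compactness/weak-star argument upgrading pointwise convergence of $F_N\Theta$ to norm convergence --- is precisely the paper's proof of \thref{THM:CIDR}. The only soft spot, absorbing the possibly unbounded outer factor $\mathcal{O}_f$, is handled in the paper by writing $f=a/b$ with $a,b\in H^\infty$ and invoking weak-star cyclicity of bounded outer functions, which is the rigorous version of the truncation argument you sketch.
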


The above theorem was initially proved by Korenblum and Roberts in the classical setting of the Bergman spaces $A^p(W)$, corresponding to weights of the form $W(t)=t^{\alpha}$. They also showed that the above condition on $\sigma_f$ is not only sufficient, but also necessary. For a wider range of weights, the same conclusion was also recently confirmed in \cite{limani2024shift}, indicating that \thref{THM:CYCSlow} is sharp. Our proof of \thref{THM:CYCSlow} is carried in the following steps. First, we simply reduce the problem to cyclicity of $f$ to the associated singular inner factor $\Theta_{\sigma_f}$. Secondly, we shall utilize a Roberts-type decomposition adapted to the corresponding weight $W$, allowing us to decompose singular measures. In the last step, our approach substantially deviates from Korenblums proof and from Roberts, where the former involves an implicit linear programming argument (see \cite{korenblum1977beurling}), while the latter invokes a quantitative version of Carleson's Corona Theorem (see \cite{roberts1985cyclic}). Instead, we shall carry out a fairly explicit construction of bounded holomorphic functions $(h_n)_n$ in $\D$, such that $\Theta_{\sigma_f} h_n-1$ have small $A^p(W)$-norms. 

\subsection{Logarithmic integral divergence}
We now restrict our attention to weights which tend to zero rapidly. That is, we assume that $\log W$ is not integrable:
    \begin{equation}\label{EQ:LogDiniDiv}
    \int_0^1 \log W(t) dt =-\infty.
    \end{equation}
Note that the above condition is slightly stronger than \eqref{EQ:LogDiniReg}, and is equivalent to the assertion that the associated gauge-function $\kappa_W(t)$ is not Dini-continuous. In this setting, our second result provides a new proof of Theorem 1.1 from \cite{el2012cyclicity} on cyclic singular inner functions in $A^p(W)$, due to El-Fallah, Kellay, and Seip. We shall re frame it in the following general form.

\begin{thm}\thlabel{THM:CYCFast} Let $0<p\leq \infty$ and $W$ be a good weight which satisfies the condition \eqref{EQ:LogDiniDiv}. Then any $f\in A^p(W) \cap \mathcal{N}$ with no zeros in $\D$ is cyclic in $A^p(W)$.
\end{thm}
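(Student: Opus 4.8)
The plan is to reduce, as in the proof of \thref{THM:CYCSlow}, the cyclicity of a zero-free $f \in A^p(W) \cap \N$ to the cyclicity of its negative singular inner factor $\Theta_{\sigma_f}$. Indeed, writing $f = \mathcal{O}_f \Theta_{\sigma_f}/\Theta_{\nu_f}$, the outer factor $\mathcal{O}_f$ and the singular inner function $\Theta_{\nu_f}$ in the denominator are harmless: $\mathcal{O}_f$ is cyclic in every $A^p(W)$ by the usual approximation-by-outer-functions argument (polynomials are dense, and one can multiply by $1/\mathcal{O}_f^{1/k}$ truncated appropriately), while $\Theta_{\nu_f}$ being a bounded multiplier with $1/\Theta_{\nu_f} = f/(\mathcal{O}_f\Theta_{\sigma_f}) \in \N$ forces its Nevanlinna counterpart to be absorbed. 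So the whole content is: under \eqref{EQ:LogDiniDiv}, \emph{every} singular inner function $\Theta_\sigma$ is cyclic in $A^p(W)$. Since every positive finite singular measure $\sigma$ puts no mass on any set of finite $\kappa_W$-entropy would be the wrong statement here (there do exist singular measures concentrated on such sets), the point is rather that \eqref{EQ:LogDiniDiv} makes the class of finite-$\kappa_W$-entropy sets so thin — in fact, when $\kappa_W$ fails to be Dini-continuous, these sets carry no singular mass that can obstruct anything, or more precisely, the construction of \thref{THM:CYCSlow} can be pushed through \emph{unconditionally}.

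Concretely, I would run the same explicit construction as in \thref{THM:CYCSlow}: apply a Roberts-type decomposition of $\sigma$ adapted to $W$, splitting $\sigma = \sum_j \sigma_j$ into pieces at dyadic scales $2^{-j}$, and build bounded holomorphic functions $h_n$ as finite products of outer functions with carefully chosen moduli so that $\Theta_\sigma h_n \to 1$ in $A^p(W)$. The difference from the Dini-regular regime is that the scale-by-scale cost one pays is controlled by increments of $\int_0^t \log(1/W)$, and the divergence \eqref{EQ:LogDiniDiv}, equivalently $\sum_j 2^{-j}\log(1/W(2^{-j})) = \infty$, is exactly what gives enough ``room'' at infinitely many scales to kill the singular inner factor outright, with no entropy hypothesis on where $\sigma$ lives. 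The first concrete step is therefore to quantify this: show that for any finite positive singular $\sigma$ and any $\varepsilon > 0$, there is a Roberts decomposition and a bounded outer $h$ with $\norm{h}_\infty \le 1$, $h$ bounded below by a tractable amount off a small exceptional region, and $\norm{\Theta_\sigma h - 1}_{A^p(W)} < \varepsilon$.

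The key steps, in order: (1) the factorization reduction described above, isolating $\Theta_{\sigma_f}$; (2) record the equivalence of \eqref{EQ:LogDiniDiv} with non-Dini-continuity of $\kappa_W$ and with divergence of $\sum_j \kappa_W(2^{-j})$; (3) for a single dyadic scale, produce from a singular measure piece $\sigma_j$ a bounded outer function $g_j$ with $|g_j| \asymp 1$ away from a union of arcs of total length $\asymp$ (mass of $\sigma_j$)/(scale), and with $|\Theta_{\sigma_j} g_j - 1|$ small in the $W$-weighted sense on that bad set because $W$ is tiny there; (4) multiply finitely many such $g_j$ over a range of scales where the accumulated ``entropy budget'' $\sum \kappa_W(2^{-j})$ exceeds a prescribed large number (possible precisely by \eqref{EQ:LogDiniDiv}), obtaining $h_n = \prod g_j$; (5) estimate $\norm{\Theta_\sigma h_n - 1}_{A^p(W)}$ using a telescoping/triangle-inequality argument across scales, the boundedness $\norm{h_n}_\infty \le 1$, and the good-weight doubling \eqref{EQ:LogwDouble} to sum the geometric-type series; (6) conclude cyclicity of $\Theta_\sigma$, hence of $f$, in $A^p(W)$ for all $0 < p \le \infty$ simultaneously (the $p = \infty$ case being the easiest to estimate and the finite-$p$ cases following by boundedness of $h_n$ and dominated convergence, or directly).

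The main obstacle I expect is step (3)–(5): making the single-scale construction quantitatively compatible with the $W$-weighting \emph{and} ensuring the errors across infinitely-divided scales actually sum to something small rather than merely finite. In the Dini-regular case of \thref{THM:CYCSlow} one has the luxury of an entropy hypothesis on $\sigma_f$ that makes the relevant sum converge; here one must instead exploit that the \emph{divergence} of $\sum_j \kappa_W(2^{-j})$ lets one choose, for each target $\varepsilon$, a window of scales on which the geometric decay of $W$ beats the (bounded, at that scale) size of $\Theta_\sigma$ and on which the total length of exceptional arcs is forced below a level where $W$-weighted $L^p$ mass is negligible. Getting the bookkeeping right — in particular controlling the contribution of $\Theta_\sigma$ itself on the exceptional arcs, where it is not small — is the delicate point, and is presumably where the argument must be arranged so that on each bad arc the factor $W(1-|z|)$ is already smaller than any fixed power one needs, which is exactly the force of \eqref{EQ:LogDiniDiv}.
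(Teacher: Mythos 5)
Your reduction to the cyclicity of the singular inner factor $\Theta_{\sigma_f}$ matches the paper (\thref{LEM:CycInner}), and the general philosophy -- explicit bounded outer multipliers built from smoothed pieces of the measure, no Corona theorem -- is also the paper's. But the core of your plan has a genuine gap: you propose to reuse the Roberts decomposition and ``push the construction of \thref{THM:CYCSlow} through unconditionally.'' That construction cannot go through as stated. Its Step 1 hinges on the tail estimate $\sum_{j>k}\kappa_W(2^{-n_j})\lesssim \kappa_W(2^{-n_{k+1}})$, which is exactly Dini-regularity \eqref{EQ:LogDiniReg}; under \eqref{EQ:LogDiniDiv} one has $\int_0^x\log\frac{1}{W(t)}\,dt=+\infty$ for every $x$, so \eqref{EQ:LogDiniReg} fails outright and that estimate is unavailable. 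Moreover a Roberts-type decomposition is powerless for precisely the measures that matter here: for an atom (or any $\sigma$ charging a finite-$\kappa_W$-entropy set) the decomposition dumps the mass into $\mu_\infty$, and no scale-by-scale modulus-of-continuity bound of the form $\sigma_k(I)\le \eta\,\kappa_W(2^{-n_k})$ with $\mu_\infty\equiv 0$ is on offer. The paper's actual mechanism is different: it never decomposes $\sigma$ by location at all, but ``whittles'' it by scalars, $\sigma=\sum_k\varepsilon_k\sigma$, smoothing the $k$-th copy at scale $1/n_k$, where $(n_k)$ is a sparse sequence produced by a discretized form of \eqref{EQ:LogDiniDiv} (\thref{LEM:DISDINI}: $n_{k+1}\ge An_k$, partial sums of $\log\frac{1}{W(1/n_j)}$ dominated by the last term, and $\sum_j n_{j+1}^{-1}\log\frac{1}{W(1/n_j)}=\infty$), and where the weights $\varepsilon_k$ solve a small linear program (\thref{LEM:LINPROG}) whose feasibility with $\sum_k\varepsilon_k=1$ is \emph{equivalent} to the divergence. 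This allocation is the missing idea in your steps (3)--(5); your ``window of scales'' remark gestures at it but supplies no scheme that simultaneously exhausts the full mass of $\sigma$ and keeps the accumulated Poisson cost at each scale below $c_0\log\frac{1}{W(1/n_k)}$.

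A second, smaller omission concerns the endgame. Because $\sum_k\varepsilon_k$ must equal $1$, the constant $c_0$ cannot be taken small (unlike the parameter $\eta$ in the Dini-regular case), so the construction only yields the uniform bound $\sup_z|F_N(z)\Theta(z)|W(1-|z|)^{\rho}\le C$ for some fixed, possibly large $\rho>2c_0$ -- not smallness of $\|\Theta F_N-1\|_{A^p(W)}$ directly. The paper then lets the parameters depend on $N$ ($\varepsilon_0(N)\to 0$, $A_N\to\infty$) to get $F_N\Theta\to 1$ locally uniformly, upgrades via the compact embeddings of \thref{LEM:cptemb}, and, crucially, first obtains cyclicity of the root $\Theta^{1/2M}$ and only then recovers $\Theta$ via \thref{LEM:bddcyc}. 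Without this root-and-power step (or a substitute), your step (5)--(6) does not close, since the errors you control live in $A^\infty(W^{\rho})$ rather than in $A^p(W)$.
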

A version of this Theorem was initially proved in \cite{el2012cyclicity} (See Theorem 1.1) for singular inner functions, under a different condition expressed in terms of the moment-sequence of the weight $W$. However, it is well-known that the condition therein agrees with \eqref{EQ:LogDiniDiv} for a certain class of weights $W$. The proof in \cite{el2012cyclicity}, principally relying on methods developed by Roberts in \cite{roberts1985cyclic}, is also based on a clever way of "whittling down" the measure $\mu_f$, followed by utilizing a quantitative version of Carleson's Corona Theorem. Our proof will initially follow a similar trajectory as \cite{roberts1985cyclic}, but the novelty of our work is that we shall give an explicit construction, which does not rely on the Corona Theorem.

We give a brief comparison of \thref{THM:CYCFast} with earlier works of Beurling in \cite{beurling1964critical}, and that of Nikolskii in \cite{nikolskij1976selected}. Under certain convexity assumptions on the moment sequence of $W$, Beurling proved that every bounded holomorphic function with no zeros in $\D$,
is cyclic in $\bigcup_{n\geq 1} A^2(W^n)$, equipped with the natural inductive limit topology, if and only if
\begin{equation}\label{EQ:SqDiniCond}
    \int_0^1 \sqrt{ \frac{\log \frac{1}{W(t)}}{t}} dt = + \infty.
\end{equation}
In fact, if \eqref{EQ:SqDiniCond} does not hold, then the atomic singular inner functions are not cyclic in $\bigcup_{n\geq 1} A^2(W^n)$. Beurling's original proof relied on a certain form of Bernstein approximation, which crucially required an additional convexity assumption. Later, Nikolskii established a similar result in the Hilbertian setting of $A^2(W)$ under a different log-concavity condition on the moments of $W$, which instead principally relied on methods of quasi-analyticity. It was only much later that Borichev, El-Fallah, and Hanine succeeded in removing the assumptions of Beurling and Nikolskii. They proved that atomic singular inner functions are cyclic in $A^\infty(W)$ if and only if condition \eqref{EQ:SqDiniCond} holds (see \cite{borichev2014cyclicity}). Their approach employed the so-called resolvent transform method, initially developed by Carleman, Domar, and Gelfand (see \cite{domar1975analytic} and references therein). A key component of their proof relies on \thref{THM:CYCFast} for singular inner functions, as established in \cite{el2012cyclicity}. However, their methods do not appear to extend to proving that any zero-free bounded holomorphic function is cyclic in $A^\infty(W)$ for weights more general than those considered by Beurling and Nikolskii.





\subsection{Notations and organization}
The manuscript is organized as follows. In Section \ref{SEC:2} we gather some basic preliminary lemmas in order to equip us for the following sections. The central tool therein is the simple reduction to that of cyclicity of singular inner functions. Section \ref{SEC:3} is principally concerned with the proof of \thref{THM:CYCSlow}, and principally relies on a generalized Roberts-type decomposition of singular measures. At last, Section \ref{SEC:4} is devoted to the proof of \thref{THM:CYCFast}.

\section{General properties of $A^p(W)$}\label{SEC:2}

\subsection{Compact embeddings}

Here we gather some preliminary results of $A^p(W)$-functions, which will be utilized in the later sections. We start out by recording the following observation on compact embeddings in growth spaces.

\begin{lemma}\thlabel{LEM:cptemb} The embeddings $A^p(W) \hookrightarrow A^{q}(W)$ for $0<q<p\leq \infty$, and $A^p(W) \hookrightarrow A^p(W^{s})$ for $s>1$ are compact.
\end{lemma}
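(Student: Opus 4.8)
The strategy is to prove, in both cases, that a norm-bounded sequence $(f_n)$ in the larger space has a subsequence converging in the smaller space, exploiting that bounded sets in $A^p(W)$ are locally uniformly bounded and hence normal families. First I would record the elementary continuous inclusions. For $0 < q < p \leq \infty$, the inclusion $A^p(W) \hookrightarrow A^q(W)$ follows from Jensen/Hölder applied to the measure $W(1-|z|)\,dA(z)$ after noting that this measure is finite (since $W \leq 1$ and $A(\D) < \infty$) and that $A^\infty(W)$ embeds into every $A^q(W)$ because $|f(z)|^q W(1-|z|) \leq \|f\|_{A^\infty(W)}^q W(1-|z|)^{1-q}$, which is integrable as $W \leq 1$. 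For $A^p(W) \hookrightarrow A^p(W^s)$ with $s>1$ the inclusion is immediate from $W^s \leq W$ (as $0 < W \leq 1$). So continuity is routine; the content is compactness.

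For compactness I would use the standard normal-families argument. Let $(f_n)$ be bounded in the larger space, say $\|f_n\| \leq 1$. A bound on $\|f_n\|_{A^p(W)}$ or $\|f_n\|_{A^\infty(W)}$ forces a pointwise bound $|f_n(z)| \leq C_K$ on each compact $K \subset \D$: for the $A^\infty$ case this is immediate from the definition, and for the $A^p$ case it follows from the sub-mean-value property of $|f_n|^p$ over a disk $D(z,\rho) \subset \D$ together with $W$ being bounded below on the relevant compact annulus. By Montel's theorem, after passing to a subsequence, $f_n \to f$ uniformly on compact subsets of $\D$, and $f$ lies in the larger space by Fatou. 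Replacing $f_n$ by $f_n - f$, it suffices to show: if $g_n$ is bounded in the larger space and $g_n \to 0$ uniformly on compacta, then $\|g_n\|_{\text{smaller}} \to 0$.

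The main obstacle — and the only real point — is controlling the tail near $\T$. Fix $\varepsilon > 0$ and choose $r < 1$. On $\{|z| \leq r\}$ the relevant integral or supremum tends to $0$ by uniform convergence. On the annulus $\{r < |z| < 1\}$ one estimates the ``difference of weights.'' In the $A^q$-versus-$A^p$ case, write (for $p < \infty$) $\int_{r<|z|<1}|g_n|^q W \,dA \leq \left(\int |g_n|^p W\,dA\right)^{q/p}\left(\int_{r<|z|<1} W\,dA\right)^{1-q/p}$ by Hölder, and the second factor is small for $r$ close to $1$ since $W\,dA$ is a finite measure; the $p = \infty$ endpoint is the integrability estimate above restricted to the annulus. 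In the $W^s$-versus-$W$ case, the key is that $W^s/W = W^{s-1} \to 0$ as $|z|\to 1$ since $W(t) \to 0$, so $\int_{r<|z|<1}|g_n|^p W^s\,dA \leq \big(\sup_{r<|z|<1} W(1-|z|)^{s-1}\big)\int |g_n|^p W\,dA$, and the supremum is small for $r$ near $1$; for $p=\infty$, $W^s|g_n| = W^{s-1}\cdot(W|g_n|) \leq \|g_n\|_{A^\infty(W)} W^{s-1}$ on the annulus. In every case the annular contribution is bounded by (something tending to $0$ as $r \to 1$) uniformly in $n$, and the near-origin contribution tends to $0$ in $n$ for fixed $r$; a standard $\varepsilon/2$ split completes the proof. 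Note none of this uses the ``good weight'' hypothesis \eqref{EQ:LogwDouble} or Dini-regularity — only $0 < W \leq 1$, continuity, and $W(0^+)=0$.
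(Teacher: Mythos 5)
Your proposal is correct and follows essentially the same route as the paper: a Montel--Fatou normal-families argument combined with a split into a compact part and an annular tail, where the tail is controlled by H\"older's inequality (for $q<p$) and by the smallness of $W^{s-1}$ near the boundary (for the $W^s$ case), exactly as in the paper's proof. The only cosmetic difference is that you justify the smallness of the annular H\"older factor via finiteness of the measure $W\,dA$, while the paper bounds it by $W(\varepsilon)^{1-q/p}$ using monotonicity of $W$; both are fine, and your observation that no regularity of $W$ beyond $0<W\leq 1$, continuity and $W(0^+)=0$ is needed is consistent with the paper.
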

\begin{proof} If $(f_n )_n$ is a sequence in the unit-ball of $A^p(W)$, then for any $p>q$ and any $0<\varepsilon<1$, we have by H\"older's inequality
\begin{multline*}
\norm{f_n}^q_{A^q(W)} \leq \int_{|z|\leq 1-\varepsilon} \abs{f_n}^q W dA + \left( \int_{1-|z|< \varepsilon} |f_n|^p W dA \right)^{q/p} \left(\int_{1-|z|< \varepsilon} W dA\right)^{1-q/p} \leq \\
\int_{|z|\leq 1-\varepsilon} \abs{f_n}^q W dA + W(\varepsilon)^{1-q/p}.
\end{multline*}
Since $(f_n)_n$ forms a normal family, Montel's Theorem implies that a subsequence $f_{n_k}$ converges uniformly on compact subsets of $\D$ to a holomorphic function $f$ in $\D$. Fatou's lemma implies that $f$ belongs to the unit-ball of $A^p(W)$ and the above estimate applied to $f_{n_k} -f$ gives
\[
\limsup_n \norm{f_{n_k}-f}^q_{A^q(W)} \leq W(\varepsilon)^{1-q/p}, \qquad \forall \varepsilon >0.
\]
This proves the first claim. For the second claim, we may repeat the same argument as before, but instead utilize the following estimate:
\[
\norm{f}^p_{A^p(W^s)} \leq \int_{|z|\leq 1-\varepsilon} \abs{f(z)}^p W(1-|z|)^s dA(z) + W(\varepsilon)^{s-1}\norm{f}^p_{A^p(W)}.
\]
The case $p=\infty$ is similar, we omit the details.
\end{proof}
\subsection{Cyclic elements in $A^p(W)$}
Here we collect two basic lemmas on cyclic elements in $A^p(W)$. We denote by $H^\infty$ the Banach space of bounded holomorphic functions in $\D$, equipped with the supremum norm 
$\norm{f}_{H^\infty} := \sup \{\abs{f(z)}: z\in \D \}$. It is not difficult to see that $H^\infty$ is the multiplier algebra of $A^p(W)$. The smallest $M_z$-invariant subspace of $A^p(W)$, which contains $f$, will be denoted by $\left[f \right]_{A^p(W)}$. With this notation, $f$ is cyclic in $A^p(W)$ if and only if $\left[f\right]_{A^p(W)} = A^p(W)$, and since polynomials are dense in $A^p(W)$, this happens if and only if $1 \in \left[f \right]_{A^p(W)}$. 

\begin{lemma}\thlabel{LEM:fH} Let $0<p\leq \infty$. Then an element $f \in A^p(W)$ is cyclic if and only if $f H^\infty := \{ fh: h\in H^\infty \} $ is dense in $A^p(W)$. 
\end{lemma}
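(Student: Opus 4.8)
The plan is to show that both conditions are equivalent to the single statement $\overline{fH^\infty} = A^p(W)$, where the closure is taken in the topology of $A^p(W)$. Recall that $[f]_{A^p(W)}$ is by definition the closure of the linear span of $\{f, zf, z^2f, \dots\}$, i.e.\ the closure of $\{fP : P \text{ a polynomial}\}$. Since $H^\infty$ is the multiplier algebra of $A^p(W)$ (and in particular $\norm{fh}_{A^p(W)} \leq \norm{h}_{H^\infty}\norm{f}_{A^p(W)}$ for all $h\in H^\infty$, with the obvious reading when $0<p<1$ or $p=\infty$), multiplication by a fixed $h \in H^\infty$ is continuous on $A^p(W)$. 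The key claim is therefore:
\[
\overline{\{fP : P \text{ a polynomial}\}} = \overline{fH^\infty}.
\]
The inclusion $\subseteq$ is immediate since polynomials lie in $H^\infty$. For $\supseteq$, it suffices to show $fh \in [f]_{A^p(W)}$ for every $h \in H^\infty$.

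For the latter, I would use a dilation argument. Given $h \in H^\infty$ and $0<r<1$, set $h_r(z) = h(rz)$; then $\norm{h_r}_{H^\infty} \leq \norm{h}_{H^\infty}$ and $h_r \to h$ pointwise on $\D$ as $r \to 1^-$. When $0<p<\infty$, since $\abs{f(z)h_r(z)}^p W(1-\abs{z}) \leq \norm{h}_{H^\infty}^p \abs{f(z)}^p W(1-\abs{z}) \in L^1(dA)$, dominated convergence gives $fh_r \to fh$ in $A^p(W)$. When $p=\infty$, one splits $\D$ into an annulus $\{\rho < \abs{z} < 1\}$ on which $W(1-\abs{z})\abs{f(z)}$ is small (using $f \in A^\infty(W)$) and a disk $\{\abs{z}\leq \rho\}$ on which $h_r \to h$ uniformly while $W(1-\abs{z})\abs{f(z)}$ stays bounded; this yields $fh_r \to fh$ in $A^\infty(W)$ as well. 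Finally, each $h_r$ extends holomorphically to a neighborhood of $\overline{\D}$, so its Taylor partial sums $P_n$ converge to $h_r$ uniformly on $\overline{\D}$, whence $fP_n \to fh_r$ in $A^p(W)$ by the multiplier estimate. Thus $fh_r \in [f]_{A^p(W)}$ for each $r$, and letting $r \to 1^-$ gives $fh \in [f]_{A^p(W)}$, proving the claim.

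With the claim in hand, the lemma follows at once: $f$ is cyclic $\iff$ $[f]_{A^p(W)} = A^p(W)$ $\iff$ $\overline{\{fP\}} = A^p(W)$ $\iff$ $\overline{fH^\infty} = A^p(W)$ $\iff$ $fH^\infty$ is dense in $A^p(W)$. The only mildly delicate point — and the one I would treat carefully — is the passage $fh_r \to fh$ in the quasi-Banach regime $0<p<1$ and in the $p=\infty$ case, where one cannot appeal to a norm and must argue convergence of the defining quantities directly; everything else is routine once the multiplier property and density of polynomials (already recorded in the excerpt) are invoked.
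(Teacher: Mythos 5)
Your proposal is correct and follows essentially the same route as the paper: the entire content is the verification that $fh \in \left[f\right]_{A^p(W)}$ for every $h \in H^\infty$, achieved by approximating $h$ with polynomials that are uniformly bounded in $H^\infty$ and converge locally uniformly on $\D$, and then exploiting the decay of $W(1-|z|)\abs{f(z)}$ near the boundary (respectively dominated convergence for $p<\infty$). The only cosmetic difference is that you produce these approximants in two steps, via dilations $h_r$ followed by Taylor partial sums, whereas the paper takes Fej\'er means of $h$ directly.
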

\begin{proof} One implication is obvious. For the other it suffices to prove that $fh \in \left[f\right]_{A^p(W)}$ for any $h\in H^\infty$. To avoid redundancy, we will present the proof only for the case $p=\infty$, as the argument for $0<p<\infty$ follows in a similar manner. Since the polynomials are weak-star (sequentially) dense in $H^\infty$ (for instance, take Fej\'er means of $f$), there exists $M>0$ and polynomials $(Q_n)_n$, such that 
\begin{enumerate}
    \item[(a.)] $\sup_{n} \norm{Q_n}_{H^\infty} \leq M$,
    \item[(b.)] $Q_n \to h$ uniformly on compact subsets of $\D$.
\end{enumerate}
We now claim that $fQ_n \to fh$ in $A^\infty(W)$. Indeed, for any $0<\varepsilon<1$, we have 
\begin{multline*}
\sup_{z\in \D} \abs{f(z)\left(Q_n(z)-h(z)\right)}W(1-|z|) \leq \\ \norm{f}_{A^\infty(W)} \sup_{|z|\leq 1-\varepsilon}\abs{Q_n(z)-h(z)} + \left( M + \norm{h}_{H^\infty} \right)\sup_{1-|z|\leq \varepsilon} \abs{f(z)}W(1-|z|).
\end{multline*}
By letting $n\to \infty$ and utilizing $(b.)$, and then letting $\varepsilon \to 0+$ while using that $f\in A^\infty(W)$, the claim follows.
\end{proof}
Next, we make the following simple observation on bounded cyclic elements in $A^p(W)$.
\begin{lemma}\thlabel{LEM:bddcyc} Let $0<p\leq \infty$.  If $f \in H^\infty$ is cyclic in $A^{p}(W)$, then $f^M$ is cyclic in $A^p(W)$ for any $M>0$. 
\end{lemma}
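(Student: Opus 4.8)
The plan is to reduce the statement to two elementary facts about the cyclic subspaces $[g]_{A^p(W)}$ together with a dyadic scaling argument. I begin by noting that a cyclic $f$ cannot vanish in $\D$: if $f(z_0)=0$ then every element of $[f]_{A^p(W)}$ vanishes at $z_0$ (since convergence in $A^p(W)$ forces uniform convergence on compact subsets, and the functions $z^nf(z)$ and their linear combinations all vanish at $z_0$), which contradicts $1\in[f]_{A^p(W)}$. Hence $\log f$ admits a holomorphic branch on the simply connected domain $\D$, so for every $s\ge0$ the function $f^s:=\exp(s\log f)$ is holomorphic with $|f^s|=|f|^s$ bounded on $\D$; thus $f^s\in H^\infty\subseteq A^p(W)$ and $f^M$ is a legitimate candidate for cyclicity for each $M>0$. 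Two further inputs will be used repeatedly: multiplication by a fixed $\phi\in H^\infty$ is continuous on $A^p(W)$ (this is the statement, recorded just before \thref{LEM:fH}, that $H^\infty$ is the multiplier algebra), and $gH^\infty\subseteq[g]_{A^p(W)}$ for every $g\in A^p(W)$, which is exactly what the proof of \thref{LEM:fH} establishes.

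With these in hand, I would prove the following two observations. \emph{Downward step:} if $0<t\le s$ and $f^s$ is cyclic, then $f^t$ is cyclic. Indeed, by \thref{LEM:fH} there are $h_k\in H^\infty$ with $f^sh_k\to1$ in $A^p(W)$, and since $f^{s-t}\in H^\infty$ we may write $f^sh_k=f^t\,(f^{s-t}h_k)\in f^tH^\infty$; therefore $1\in\overline{f^tH^\infty}\subseteq[f^t]_{A^p(W)}$, and because $[f^t]_{A^p(W)}$ is a closed $M_z$-invariant subspace containing the constants it contains every polynomial, hence equals $A^p(W)$ by density of polynomials. \emph{Doubling step:} if $f^s$ is cyclic then $f^{2s}$ is cyclic. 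Indeed, with the same $h_k$, multiplying the relation $f^sh_k\to1$ by $f^s\in H^\infty$ yields $f^{2s}h_k\to f^s$, so $f^s\in[f^{2s}]_{A^p(W)}$; since the latter is closed and $M_z$-invariant it contains $[f^s]_{A^p(W)}=A^p(W)$.

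To conclude, I start from the hypothesis that $f=f^1$ is cyclic, iterate the doubling step to obtain that $f^{2^k}$ is cyclic for every $k\ge1$, and then, given an arbitrary $M>0$, pick $k$ with $2^k\ge M$ and apply the downward step with $s=2^k$, $t=M$ to deduce that $f^M$ is cyclic.

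I do not expect a genuine obstacle: the result is soft, resting only on the closedness and $M_z$-invariance of cyclic subspaces and on continuity of multipliers. The one point deserving care is that the non-integer powers $f^M$ must first be made sense of as bounded holomorphic functions, which is precisely where the zero-freeness of $f$ — a consequence of cyclicity noted above — enters; the argument is otherwise insensitive to the value of $p$, and in the quasi-Banach range $0<p<1$ nothing changes since at each stage only a single fixed multiplier is applied before passing to a limit.
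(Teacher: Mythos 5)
Your proposal is correct and takes essentially the same route as the paper's proof: zero-freeness of a cyclic $f$, the containment $gH^\infty\subseteq[g]_{A^p(W)}$ from the proof of \thref{LEM:fH}, and multiplication of an approximating sequence by the bounded powers $f^s$, iterated over exponents. The only difference is organizational — you double the exponent and then step down, while the paper first handles $0<s\leq 1$ via $f=f^sf^{1-s}$ and then advances by one unit per induction step — and both arguments are otherwise identical in substance.
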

\begin{proof} As before, we only carry out the proof in the case $p=\infty$. Note that if $f$ is cyclic in $A^\infty(W)$, then $f$ is zero-free in $\D$, and thus $f^M$ is well-defined for all $M>0$. Observe that $f= f^s f^{1-s} \in \left[f^s\right]_{A^\infty(W)}$ for all $0< s \leq 1$, hence $f^s$ is cyclic in $A^\infty(W)$ for all $0 < s\leq 1$. Now if $Q_n$ polynomials such that $Q_n f \to 1$ in $A^\infty(W)$, then multiplying by the bounded function $f^s$, we get $f^{s} \in \left[ f^{1+s} \right]_{A^\infty(W)}$, which by the previous argument implies that $f^{M}$ cyclic in $A^\infty(W)$ for all $0< M \leq 2$. By means of induction, we may iterate the above argument to deduce that $f^M$ is cyclic in $A^\infty(W)$ for all $M>0$.  \qedhere

\end{proof}

At last, we make one more simple observation, which allows us reduce our problems to that of characterizing cyclic singular inner functions in $A^p(W)$.

\begin{thm}\thlabel{LEM:CycInner} Let $0<p\leq \infty$ and $f\in \mathcal{N} \cap A^\infty(W)$ with Nevanlinna factorization $f= \mathcal{O} \Theta_{\mu}/\Theta_{\nu}$, where $\mu, \nu$ are mutually singular positive measures. If $\Theta_{\mu}$ is cyclic in $A^p(W)$, then $f$ is also cyclic in $A^p(W)$.
\end{thm}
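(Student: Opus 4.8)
The plan is to reduce the cyclicity of $f$ to that of its singular inner factor $\Theta_\mu$ in several elementary steps, invoking the preliminary lemmas already established. First, I would handle the outer factor. Since $\mathcal{O}$ is outer and $f \in A^\infty(W)$, one expects $\mathcal{O}$ to itself lie in $A^\infty(W)$ (or in a slightly larger growth space, which is harmless since cyclicity in $A^p(W)$ only requires density). The key classical fact is that an outer function is cyclic in every growth/Bergman-type space in which it lies; concretely, the functions $f_\varepsilon(z) = \mathcal{O}(z)^{1/(1+\varepsilon)} \cdot (\text{something bounded})$ approximate, or more directly, one uses that $\mathcal{O}$ can be uniformly approximated from inside by the reciprocals of bounded functions. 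A cleaner route: write $\mathcal{O} = \exp(u)$ with $u$ having positive-measure real part controlled by $\log|f|$, and observe $\mathcal{O}_r \to \mathcal{O}$ suitably, so $1/\mathcal{O}$ times $\mathcal{O}$-type truncations lie in $H^\infty$ and converge. The upshot I would record as a sub-lemma: \emph{every outer function in $A^p(W) \cap \mathcal{N}$ is cyclic in $A^p(W)$}.

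Second, I would deal with the denominator $\Theta_\nu$. Since $\Theta_\nu$ is a bounded (inner) function and $f = \mathcal{O}\Theta_\mu/\Theta_\nu \in A^p(W)$, multiplying by $\Theta_\nu \in H^\infty$ shows $\mathcal{O}\Theta_\mu = \Theta_\nu f \in [f]_{A^p(W)}$ by \thref{LEM:fH} (the multiplier argument). Hence $[\mathcal{O}\Theta_\mu]_{A^p(W)} \subseteq [f]_{A^p(W)}$, so it suffices to prove $\mathcal{O}\Theta_\mu$ is cyclic. Third, combine the two reductions: assuming $\Theta_\mu$ is cyclic, take polynomials $Q_n$ with $Q_n \Theta_\mu \to 1$ in $A^p(W)$; multiplying by the bounded function... this does not quite work directly since $\mathcal{O}$ need not be bounded. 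Instead I would argue symmetrically. We want $1 \in [\mathcal{O}\Theta_\mu]_{A^p(W)}$. Since $\mathcal{O}$ is cyclic (sub-lemma) and $\Theta_\mu$ is cyclic (hypothesis), and $\mathcal{O}$, $\Theta_\mu$ are "independent" in the sense that $\Theta_\mu \in H^\infty$ is a multiplier: from $\Theta_\mu$ cyclic pick polynomials $P_n$ with $P_n\Theta_\mu \to 1$; then $P_n \mathcal{O}\Theta_\mu \to \mathcal{O}$ in $A^p(W)$ (here $P_n$ bounded on compacts plus the growth-space estimate as in \thref{LEM:fH}), so $\mathcal{O} \in [\mathcal{O}\Theta_\mu]_{A^p(W)}$; but $\mathcal{O}$ is cyclic, so $[\mathcal{O}]_{A^p(W)} = A^p(W)$, whence $1 \in [\mathcal{O}\Theta_\mu]_{A^p(W)}$, i.e.\ $\mathcal{O}\Theta_\mu$ — and therefore $f$ — is cyclic.

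The main obstacle is the sub-lemma on cyclicity of outer functions, and more precisely the bookkeeping that $\mathcal{O}$ (and the truncations used to approximate it) genuinely lie in the relevant growth space so that the multiplier/convergence machinery of \thref{LEM:fH} applies. The standard trick is: for an outer $\mathcal{O}$ with $\log|\mathcal{O}| = P[\log|f|]$ on the boundary, the functions $g_n := \mathcal{O}/\mathcal{O}_n$ where $\mathcal{O}_n$ is the outer function with boundary modulus $\min(|f|, n)$ (equivalently truncating $\log|f|$ from above) satisfy $|g_n| \le 1$, so $g_n \in H^\infty$, and $\mathcal{O}_n g_n = \mathcal{O}$ with $\mathcal{O}_n \to \mathcal{O}$ in $A^p(W)$ because $|\mathcal{O}_n| \le |\mathcal{O}|$ pointwise with $\mathcal{O}_n \to \mathcal{O}$ uniformly on compacts — giving $\mathcal{O} \in [\mathcal{O}_n]$. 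Then one is reduced to cyclicity of the \emph{bounded} outer functions $\mathcal{O}_n$, which is classical (bounded outer functions are cyclic in $H^\infty$, hence in $A^p(W)$: $1/\mathcal{O}_n$ is approximated weak-$*$ in $H^\infty$ by polynomials times suitable truncations, or one invokes that $(\mathcal{O}_n)^{1/k} \to 1$ arguments combined with \thref{LEM:bddcyc}). I would carry out this truncation argument carefully, treating $p = \infty$ in detail and noting the $0 < p < \infty$ case is analogous, exactly in the style of the preceding lemmas.
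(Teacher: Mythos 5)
Your overall strategy (reduce to the factor $\mathcal{O}\Theta_\mu$ via the multiplier $\Theta_\nu$, then exploit cyclicity of outer functions and of $\Theta_\mu$) is reasonable, but as written the key step has a genuine gap, and it is precisely the point where the paper's proof does something different. In your third step you take polynomials $P_n$ with $P_n\Theta_\mu\to 1$ in $A^p(W)$ and conclude $P_n\mathcal{O}\Theta_\mu\to\mathcal{O}$. This does not follow: $\mathcal{O}$ is in general unbounded, hence not a multiplier of $A^p(W)$, and cyclicity of $\Theta_\mu$ gives no uniform bound on $\norm{P_n\Theta_\mu}_{H^\infty}$; the estimate in the style of \thref{LEM:fH} works only when the approximating sequence is uniformly bounded (that is what weak-star convergence in $H^\infty$ provides there), and near the boundary the product $\abs{P_n\Theta_\mu-1}\abs{\mathcal{O}}W$ cannot be controlled. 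A second problem is that $\mathcal{O}$ itself need not lie in $A^p(W)$ at all — dividing $f$ by the inner factors $\Theta_\mu$ and $\Theta_\nu$ can increase the growth — so the sub-lemma "$\mathcal{O}$ is cyclic" may not even be meaningful; this is not the harmless bookkeeping issue you suggest. Finally, in your truncation sub-lemma the bounded ratio is $\mathcal{O}_n/\mathcal{O}$ (boundary modulus $\min(1,n/\abs{\mathcal{O}})\leq 1$), not $\mathcal{O}/\mathcal{O}_n$, and the containment you need for the reduction is $\mathcal{O}_n\in[\mathcal{O}]_{A^p(W)}$ (so that cyclicity of the bounded truncation passes to $\mathcal{O}$); the containment $\mathcal{O}\in[\mathcal{O}_n]_{A^p(W)}$ that you state yields nothing.

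The paper avoids all of this by running the approximation in the opposite direction, always with uniformly bounded multipliers: write $f=a/b$ with $a,b\in H^\infty$ and $\Theta_\mu$ the inner factor of $a$. Then $a=bf\in[f]_{A^p(W)}$ by the multiplier argument of \thref{LEM:fH}, and since the bounded outer factor of $a$ is weak-star cyclic in $H^\infty$, one can multiply $a$ by a \emph{uniformly bounded} sequence of polynomials converging to the reciprocal on compacts to get $\Theta_\mu\in[a]_{A^p(W)}\subseteq[f]_{A^p(W)}$; cyclicity of $\Theta_\mu$ then finishes. Your argument can be repaired along the same lines using your own truncations: since $\mathcal{O}_n/\mathcal{O}\in H^\infty$, you get $\mathcal{O}_n\Theta_\mu\in[\mathcal{O}\Theta_\mu]_{A^p(W)}$; multiplying the relation $P_k\Theta_\mu\to1$ by the \emph{bounded} function $\mathcal{O}_n$ gives $\mathcal{O}_n\in[\mathcal{O}_n\Theta_\mu]_{A^p(W)}$, and cyclicity of the bounded outer function $\mathcal{O}_n$ then yields $1\in[\mathcal{O}\Theta_\mu]_{A^p(W)}$ — but the step multiplying by the unbounded $\mathcal{O}$, as you wrote it, must be removed.
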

\begin{proof}
It is a standard fact that one can express $f= a/b$, where $a,b \in H^\infty$ and $\Theta_{\mu}$ is the inner factor of $a$. Recall that bounded outer functions are weak-star (sequentially) cyclic in $H^\infty$ (for instance, see Theorem 7.4 in \cite{garnett}), and hence they can easily be shown to be cyclic in $A^\infty(W)$ by following an argument similar to the proof of \thref{LEM:fH}. This implies that $\Theta_{\mu} \in \left[a\right]_{A^p(W)} \subseteq \left[ f \right]_{A^p(W)}$. The claim now follows.
\end{proof}

\section{Dini-regular weights}\label{SEC:3}

\subsection{Cyclic inner functions}
Our main goal in this section is to prove the following theorem on cyclic inner functions. 

\begin{thm}\thlabel{THM:CIDR} Let $W$ be a good weight which satisfies the condition \eqref{EQ:LogDiniReg}. Then $\Theta_{\mu}$ is cyclic in $A^\infty(W)$ if $\mu(K)=0$ for any set $K\subset \T$ of finite $\kappa_w$-entropy.
\end{thm}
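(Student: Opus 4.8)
The plan is to verify the standard criterion $1 \in [\Theta_\mu]_{A^\infty(W)}$ by an explicit construction. By \thref{LEM:fH} this is equivalent to producing, for each $\delta>0$, a function $h \in H^\infty$ with $\norm{\Theta_\mu h - 1}_{A^\infty(W)}<\delta$; density of $\Theta_\mu H^\infty$ in $A^\infty(W)$ then follows, and multiplication by polynomials recovers all of $A^\infty(W)$. By \thref{LEM:bddcyc} we may replace $\mu$ by $\epsilon\mu$ for a small $\epsilon>0$, so we may assume $\mu(\T)$ is as small as we please; $\epsilon$ will double as the flatness parameter below and be sent to $0$ only at the end. I write $\mathcal{D}_n$ for the dyadic arcs of generation $n$ on $\T$, set $r_n := 1-2^{-n}$, and abbreviate $u_\rho := -\log\abs{\Theta_\rho}\ge 0$ (the Poisson extension of a positive measure $\rho$).

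\emph{A weighted Roberts decomposition.} Fix a rapidly increasing sequence of generations $(n_k)_k$, to be chosen in terms of $\epsilon$ and the weight $W$. Running Roberts' descent along these generations---at step $k$, on every $I \in \mathcal{D}_{n_k}$ with $\mu^{(k-1)}(I)>\epsilon\abs{I}$ freeze a rescaled copy of $\mu^{(k-1)}|_I$ of total mass $\epsilon\abs{I}$, push the surplus to the next step, and absorb the already light arcs wholesale---produces positive measures $\mu_k$ with $\mu = \sum_{k\ge 1}\mu_k$, residuals $\mu^{(N)} := \sum_{k>N}\mu_k$, and each $\mu_k$ \emph{$\epsilon$-flat at generation $n_k$}, meaning $\mu_k(I)\le \epsilon\abs{I}$ for all $I \in \mathcal{D}_{n_k}$. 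A standard Poisson estimate turns this flatness into the key bound $u_{\mu_k}(z)\le C\epsilon$ whenever $\abs{z} \le r_{n_k}$. Since $\mu^{(N)}$ is carried by the set of points lying in an $\epsilon$-heavy arc at \emph{each} of the levels $n_1,\dots,n_N$, the masses $\mu^{(N)}(\T)$ decrease to $\mu(E_\infty)$, where $E_\infty$ is the ``perpetually heavy'' set. Here the hypothesis enters: by the Korenblum--Roberts entropy estimate, adapted to the gauge $\kappa_W$ through its Dini-regularity \eqref{EQ:LogDiniReg}, $E_\infty$ is contained in a set of finite $\kappa_W$-entropy, so $\mu(E_\infty)=0$ and $\mu^{(N)}(\T)\to 0$.

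\emph{The explicit multipliers.} For each $k$, the function $z\mapsto \Theta_{\mu_k}(r_{n_k}z)$ is holomorphic and bounded on $\overline{\D}$, and the key bound gives $\abs{\Theta_{\mu_k}(r_{n_k}z)}\ge e^{-C\epsilon}$ there, so its reciprocal lies in $H^\infty$. Put
\[
h_N(z) := \prod_{k=1}^{N}\frac{1}{\Theta_{\mu_k}(r_{n_k}z)} \in H^\infty,\qquad\text{whence}\qquad \Theta_\mu h_N = \Big(\prod_{k=1}^{N}\frac{\Theta_{\mu_k}(z)}{\Theta_{\mu_k}(r_{n_k}z)}\Big)\,\Theta_{\mu^{(N)}}(z).
\]
I claim $\norm{\Theta_\mu h_N - 1}_{A^\infty(W)}\to 0$ once $(n_k)$ is chosen well, and estimate on two regions. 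On a compact $\{\abs{z}\le r_M\}$, a Cauchy estimate for the holomorphic function $-\log\Theta_{\mu_k}$ yields $\abs{\log(\Theta_{\mu_k}(z)/\Theta_{\mu_k}(r_{n_k}z))}\lesssim 2^{2M}\mu_k(\T)2^{-n_k}\le 2^{2M}\mu(\T)2^{-n_k}$; this is summable in $k$ and as small as desired once the $n_k$ are large, so together with $\mu^{(N)}(\T)\to 0$ it forces $\Theta_\mu h_N\to 1$ uniformly on $\{\abs{z}\le r_M\}$. On the collar $\abs{z} = 1-2^{-m}$ with $m$ large, one has $\abs{\Theta_\mu h_N(z)}\le \exp\!\big(\sum_{k\le N}(u_{\mu_k}(r_{n_k}z)-u_{\mu_k}(z))\big)\le \exp\!\big(\sum_{k\le N}u_{\mu_k}(r_{n_k}z)\big)$; flatness bounds each term by $C\epsilon$, and since the dilations are radial the point $r_{n_k}z$ sits at essentially the same depth as $z$ whenever $n_k\ge m$, so all but the $j(m):=\#\{k:n_k<m\}$ terms with $n_k<m$ are cancelled, up to a bounded factor, by $-u_{\mu_k}(z)$; hence $\abs{\Theta_\mu h_N(z)}\le C\exp(C\epsilon\, j(m))$, \emph{uniformly in $N$}. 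Choosing $(n_k)$ so that $n_k$ is roughly the depth at which $\log(1/W(2^{-m}))$ first reaches $\asymp\epsilon k$ makes $C\epsilon\, j(m)\le\tfrac12\log(1/W(2^{-m}))$, whence $W(1-\abs{z})\abs{\Theta_\mu h_N(z)}\lesssim W(2^{-m})^{1/2}\to 0$ as $m\to\infty$, uniformly in $N$. Combining the two regions, letting $N\to\infty$ and then $\epsilon\to 0$, finishes the proof; \thref{THM:CYCSlow} then follows via \thref{LEM:CycInner}, \thref{LEM:bddcyc}, and the embeddings in \thref{LEM:cptemb}.

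\emph{Main obstacle.} The delicate point is choosing $\epsilon$ and the generation sequence $(n_k)$ consistently: $(n_k)$ must grow \emph{slowly enough} relative to $W$ that the collar estimate $C\epsilon\, j(m)\lesssim \log(1/W(2^{-m}))$ holds, yet be arranged so that the Roberts descent still annihilates the perpetually heavy set---and it is precisely the Dini-regularity \eqref{EQ:LogDiniReg} of $\kappa_W$ that guarantees a sequence meeting both requirements. A secondary technicality is the circular dependence of $\mu_k$ on $n_1,\dots,n_k$, resolved by fixing the $n_k$ inductively, each sufficiently large in terms of the earlier ones and of the target precision $\delta$.
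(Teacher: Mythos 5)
Your overall architecture (Roberts-type descent into flat pieces, explicit bounded multipliers built from the pieces, a uniform weighted bound plus pointwise convergence, then a compactness argument) is the same as the paper's, and your choice of multipliers $\prod_k \Theta_{\mu_k}(r_{n_k}z)^{-1}$ instead of the paper's outer functions $\exp(H(\sum_k f_k))$ would be an acceptable variant. The gap is in the decomposition itself, and it is fatal as written: you flatten with the \emph{linear} gauge, $\mu_k(I)\le\epsilon\abs{I}$, and declare an arc heavy when $\mu^{(k-1)}(I)>\epsilon\abs{I}$, but then invoke ``the Korenblum--Roberts entropy estimate, adapted to the gauge $\kappa_W$'' to conclude that the perpetually heavy set $E_\infty$ has finite $\kappa_W$-entropy and hence $\mu^{(N)}(\T)\to 0$. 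That entropy estimate is only available when heaviness is measured against the weighted gauge, i.e.\ $\mu(I)>\eta\,\kappa_W(\abs{I})=\eta\abs{I}\log\frac{1}{W(\abs{I})}$: it is this threshold that makes heavy arcs rare (their number at level $n_k$ is at most $\mu(\T)/\eta\kappa_W(2^{-n_k})$, so the total length of the heavy set tends to $0$ and the complementary arcs of the limit set satisfy $\sum\kappa_W(\ell)\lesssim\mu(\T)/\eta$, as in the paper's proof of the Roberts decomposition). With the linear threshold there is no such control: since $\mu$ is singular, $\mu(I)/\abs{I}\to\infty$ at $\mu$-a.e.\ point, so at fine scales essentially every arc meeting the support is $\epsilon$-heavy, the descent removes only $\epsilon\cdot\abs{H_k}$ of mass per level, $\abs{H_k}$ admits no decay or entropy bound, and the hypothesis ``$\mu(K)=0$ for every set of finite $\kappa_W$-entropy'' is never actually brought to bear. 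So the single step where the theorem's content lives --- exhausting $\mu$ by the flat pieces --- is unproved, and you acknowledge as much in your ``main obstacle'' paragraph without supplying the argument.

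Note also why the weighted threshold does not break your uniform bound, which seems to be the reason you insisted on $\epsilon$-flatness: a piece that is $\eta\kappa_W(2^{-n_k})$-flat at scale $2^{-n_k}$ only contributes $P(\nu_k)(z)\lesssim\eta\log\frac{1}{W(2^{-n_k})}$ at depth $1-\abs{z}\asymp 2^{-n_k}$, and this is exactly what the weight can absorb, since $W(2^{-n_k})^{\rho}\exp\bigl(C\eta\log\frac{1}{W(2^{-n_{k+1}})}\bigr)$ stays bounded once the grid is $W$-adapted (condition \eqref{EQ:Wgrid}, supplemented by \eqref{EQ:ProdDouble}) and $\eta$ is small; Dini-regularity \eqref{EQ:LogDiniReg} handles the tail $\sum_{j>k}\kappa_W(2^{-n_j})\lesssim\kappa_W(2^{-n_{k+1}})$. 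In other words, no counting of ``how many $\epsilon$-flat pieces are active at depth $m$'' is needed: the paper lets each single piece carry the full allowance $\eta\log\frac{1}{W}$, which is what simultaneously makes the heavy set an entropy set and the product uniformly bounded in $A^\infty(W^{\rho})$. If you replace your $\epsilon\abs{I}$ threshold by $\eta\kappa_W(\abs{I})$ along a $W$-adapted grid and rerun your dilation-based multipliers with the correspondingly weaker per-piece bound, your scheme can be repaired; as it stands, the residual need not vanish and the proof does not go through.
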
 

We obtain \thref{THM:CYCSlow} as an immediate corollary of \thref{THM:CIDR} in conjunction with \thref{LEM:CycInner}.

\subsection{A Roberts decomposition} 
Our principal tool in this subsection will be a Roberts-type decomposition, adapted to weights $W$ for which the associated gauge function $\kappa_W(t) \to 0$ as $t\to 0+$. We denote by $\Dy_n$ a collection of $2^n$ disjoint dyadic arcs of length $2\pi 2^{-n}$ which partition $\T$. Given a weight $W$, we declare that a sequence of positive integers $\{n_k\}_{k=0}^\infty$ gives rise to a $W$-adapted dyadic grid $\cup_{k=0}^\infty \Dy_{n_k}$ if there exists a constant $\gamma >0$, such that 
\begin{equation}\label{EQ:Wgrid}
    \sup_{k\geq 0} \frac{W(2^{-n_k})^{\gamma}}{W(2^{-n_{k+1}})} < \infty.
\end{equation}

We shall derive a natural generalization of the Roberts decomposition in \cite{roberts1985cyclic}. Notably, similar decompositions have also appeared in \cite{ivrii2024beurling}, \cite{limani2024shift} and in \cite{bourhim2004boundary}, but ours is essentially as general as it gets. 

\begin{thm}\thlabel{THM:ROBDEC}[Roberts decomposition] Let $\mu$ be a positive finite Borel measure on $\T$ which is singular with respect to $dm$, and let $W$ be a continuous non-decreasing weight with $\lim_{t\to 0+} \kappa_W(t)=0$. Then for any integer $n_0>0$ any $\eta >0$, and any $W$-adapted dyadic grid $\cup_{k=0}^\infty \Dy_{n_k}$, there exist positive finite Borel measures $(\mu_k )_k$ and $\mu_\infty$ on $\T$ which decompose $\mu$ as
\[
\mu = \mu_\infty + \sum_{k=0}^\infty \mu_k,
\]
and where the pieces satisfy the following:

\begin{enumerate}
   \item[(i)]
    \[
    \sup_{|I|\leq 2^{-n_k} } \mu_k(I) \leq \eta \kappa_W ( 2^{-n_k}), \qquad k=0,1,2, \dots
    \]
    where the supremum is taken over all arcs $I\subset \T$ of length at most $2^{-n_k}$.
    \item[(ii)] $\mu_\infty$ is supported on a set of finite $\kappa_W$-entropy.

\end{enumerate}
    Furthermore, if $\mu$ does not assign mass to any set of finite $\kappa_W$-entropy, then the above decomposition holds with $\mu_\infty \equiv 0$, for any choice of parameters $n_0, \eta$ and $W$-adapted dyadic grid $\cup_{k=0}^\infty \Dy_{n_k}$.
\end{thm}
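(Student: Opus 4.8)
\textbf{Proof proposal for the Roberts decomposition (Theorem~\ref{THM:ROBDEC}).}

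The plan is to run the classical Roberts "peeling" algorithm on the dyadic grid $\cup_k \Dy_{n_k}$, but with the threshold at generation $n_k$ rescaled by the gauge function $\kappa_W(2^{-n_k})$ rather than by a power of the arc-length. Fix $\eta>0$ and the $W$-adapted grid. Starting with $\mu$, at stage $k$ we define $\mu_k$ by restricting the current residual measure to those arcs $I\in\Dy_{n_k}$ on which it is "too heavy": more precisely, for each $I\in\Dy_{n_k}$ with (residual mass)$(I)>\eta\kappa_W(2^{-n_k})$ we subtract off a carefully chosen piece supported on $I$ so that what remains on $I$ is spread out with density exactly at the threshold, and we pass the subtracted mass down to be handled at finer scales. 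The standard device (as in Roberts) is: on a heavy arc $I$, replace the residual measure $\mu^{(k)}|_I$ by the measure that is absolutely continuous with constant density $\eta\kappa_W(2^{-n_k})/|I|$ relative to normalized length on $I$ — call the difference $\mu_k|_I$, which is a positive measure of total mass $\mu^{(k)}(I)-\eta\kappa_W(2^{-n_k})$ — and let $\mu^{(k+1)}$ be $\mu^{(k)}$ with this replacement performed on every heavy arc (and left untouched on light arcs). Then $\mu_k$ collects all the removed differences. Property~(i) is then automatic: on any arc $J$ of length $\le 2^{-n_k}$, $\mu_k$ charges $J$ by at most the spread-out density times $|J|$, which is $\le \eta\kappa_W(2^{-n_k})|J|/2^{-n_k}\le \eta\kappa_W(2^{-n_k})$ (using that $J$ meets at most $O(1)$ arcs of $\Dy_{n_k}$ when $|J|\le 2^{-n_k}$; the constant is absorbed, or one adjusts $\eta$).

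The two substantive points are: (a) the process converges, i.e.\ $\mu^{(k)}$ has a weak-$*$ limit $\mu_\infty$, which it does since $\mu^{(k)}(\T)\le\mu(\T)$ for all $k$ and the $\mu_k$ are positive with $\mu=\mu_\infty+\sum_{k\le K}\mu_k+(\text{residual at level }K)$ — so $\sum_k\mu_k(\T)\le\mu(\T)<\infty$ and $\mu_\infty:=\mu-\sum_k\mu_k\ge 0$; and (b) identifying $\mu_\infty$'s support. Here is where the $W$-adapted condition~\eqref{EQ:Wgrid} and $\kappa_W(t)\to 0$ enter. Let $E$ be the set of points that lie in a heavy arc at infinitely many stages $k$; off $E$, a point eventually sits only in light arcs, where the residual density is bounded by the threshold and the total residual mass near it tends to $0$, so $\mu_\infty$ is carried by $E$. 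One shows $E$ has finite $\kappa_W$-entropy: if $x\in E$ is in a heavy arc $I_k\in\Dy_{n_{k}}$, then $\mu(I_k)\ge\mu^{(k)}(I_k)>\eta\kappa_W(2^{-n_k})$ where $I_k$ is an ancestor-type arc; a Vitali/stopping-time covering argument, summing $\kappa_W$ over a disjointified family of such heavy arcs, is controlled by $\eta^{-1}\mu(\T)$ because the $\mu(I)$'s over disjoint $I$ sum to at most $\mu(\T)$ — provided $\kappa_W(2^{-n_k})$ does not shrink so fast relative to $\kappa_W(2^{-n_{k+1}})$ that the geometric-series bookkeeping fails, which is exactly what the doubling of $\kappa_W$ (equivalently \eqref{EQ:LogwDouble}, equivalently \eqref{EQ:Wgrid}) guarantees. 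The complement $\T\setminus E$ is open, a union of arcs, and $\sum\kappa_W(\ell_k)<\infty$ follows from the same summed bound.

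For the final clause: if $\mu$ assigns no mass to any set of finite $\kappa_W$-entropy, then since $\mu_\infty$ is both a restriction of $\mu$ (being $\le\mu$, and in fact carried by a Borel set on which it coincides with a restriction) and supported on the finite-entropy set $E$, we get $\mu_\infty(E)\le\mu(E)=0$, hence $\mu_\infty\equiv 0$, for every choice of $n_0,\eta$ and adapted grid. I expect the main obstacle to be step~(b): making the covering/stopping-time argument that bounds the $\kappa_W$-entropy of $E$ fully rigorous, in particular handling the fact that "heavy at stage $k$" refers to arcs in $\Dy_{n_k}$ while nested arcs across different generations need not be dyadically nested in the usual binary sense unless the $n_k$ are consecutive — one must track ancestors in $\cup_j\Dy_{n_j}$ and use doubling of $\kappa_W$ to convert $\sum_k\kappa_W(2^{-n_k})$-type tails into convergent geometric sums. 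The convergence and property~(i) are routine once the algorithm is set up; the bookkeeping in (b) is the crux.
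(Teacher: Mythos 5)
Your overall plan --- peeling $\mu$ along the $W$-adapted grid with thresholds $\eta\kappa_W(2^{-n_k})$ --- is indeed the paper's, but the way you set up the peeling is not Roberts' device and, as written, it fails. On a heavy arc $I$ you retain the absolutely continuous measure of constant density $\eta\kappa_W(2^{-n_k})/|I|$ and declare $\mu_k|_I$ to be the difference of $\mu^{(k)}|_I$ and that smoothed measure. Since $\mu^{(k)}|_I$ is singular with respect to $dm$, it dominates no nontrivial absolutely continuous measure, so this difference is a genuinely signed measure, not a positive one as the theorem requires; and property (i) fails for it: an atom of mass $a$ enters $\mu_k$ at the first stage where its arc is heavy and then charges arbitrarily short arcs with mass about $a\gg\eta\kappa_W(2^{-n_k})$. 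The density bound you invoke to call (i) ``automatic'' applies to the retained smoothed measure, not to the removed difference. In Roberts' actual scheme no smoothing enters the measure decomposition at all (smoothing appears only later, in the densities $f_k$ of \eqref{DEF:fk}): on a \emph{light} arc the entire residual is removed into $\mu_k$, and on a heavy arc one removes a positive sub-measure of the residual of total mass exactly $\eta\kappa_W(2^{-n_k})$, so the new residual is supported on the union $H_k$ of heavy arcs. Your decision to leave light arcs untouched is equally fatal: if $\mu$ is a nonzero singular measure with $\mu(I)\le\eta\kappa_W(2^{-n_k})$ for every $I\in\Dy_{n_k}$ and every $k$ (such measures exist, since $2^{n_k}\eta\kappa_W(2^{-n_k})=\eta\log\frac{1}{W(2^{-n_k})}\to\infty$ leaves ample room to spread a fixed mass singularly below the thresholds), then your algorithm never sees a heavy arc, nothing is ever removed, $E=\emptyset$, yet $\mu_\infty=\mu\neq 0$; so your claim that $\mu_\infty$ is carried by $E$ is false and conclusion (ii) and the final clause can fail, whereas the correct algorithm simply puts all of $\mu$ into $\mu_0$.

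The second gap, which you acknowledge, is the entropy estimate --- and this is precisely the content of the paper's proof; moreover your ``Vitali/stopping-time'' outline aims at the wrong quantity, since finite $\kappa_W$-entropy is a condition on the complementary arcs of the carrier, not on the heavy arcs themselves. The paper's bookkeeping is short and concrete: $\mu_\infty$ is carried by $H=\cap_k H_k$, which has Lebesgue measure zero because $|H_k|\le\big(\eta\log\frac{1}{W(2^{-n_k})}\big)^{-1}\mu(\T)\to 0$; one then passes to the compact set $H'\supset H$ whose complement is the union over $k$ of the light arcs $\ell\in\Dy_{n_k}$ meeting $H_{k-1}$, and estimates $\sum_{\ell\in L_k}\kappa_W(|\ell|)\le |H_{k-1}|\log\frac{1}{W(2^{-n_k})}\lesssim |H_{k-1}|\log\frac{1}{W(2^{-n_{k-1}})}=\frac{1}{\eta}\sum_{I\in\Dy_{n_{k-1}}\,\text{heavy}}\mu_{k-1}(I)\le\frac{1}{\eta}\mu_{k-1}(\T)$, which is summable in $k$ because $\sum_k\mu_k(\T)\le\mu(\T)$. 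Note that the middle inequality uses the $W$-adapted grid condition \eqref{EQ:Wgrid} comparing consecutive grid scales; this is neither the same as nor equivalent to the doubling condition \eqref{EQ:LogwDouble}, which you conflate with it --- the theorem is stated without assuming $W$ good, and \eqref{EQ:LogwDouble} enters only in \thref{LEM:Wadapgrid} to produce a grid with the extra property \eqref{EQ:ProdDouble}. Your argument for the final clause ($\mu_\infty\le\mu$ and carried by a finite-entropy set, hence zero) is fine once the construction and the entropy bound are in place.
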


\begin{proof}[Sketch of proof] We run the argument as in Roberts paper, utilizing \thref{LEM:Wadapgrid} (proved below). This gives the decomposition 
\[
\mu = \mu_\infty + \sum_{k\geq 0} \mu_k,
\]
where $\mu_\infty$ is supported on the set $H := \cap_{k=0}^\infty H_k$, where each $H_k$ is the union of so-called \emph{heavy arcs} $I\in \Dy_{n_k}$, satisfying 
\[
\mu_k(I) = \eta \kappa_W(|I|).
\]
We first observe that 
\[
\abs{H_k} = \sum_{I\in \Dy_{n_k} \, \text{heavy}} \abs{I} \leq \left(\eta \log \frac{1}{W(2^{-n_k})}\right)^{-1} \sum_{I\in \Dy_{n_k} \, \text{heavy}} \mu(I) \to 0, \qquad k\to \infty,
\]
hence $H$ has zero Lebesgue measure. Let $L_k$ denote the set of interiors of the arcs in $\Dy_{n_k}$, which are not heavy, but intersect $H_{k-1}$. Then $H'= \T \setminus \cup_k \cup_{\ell \in L_k} \ell$ is a compact set, which contains $H$, and differs from it only on a countable set. It therefore suffices to verify that $H'$ has finite $\kappa_W$-entropy. To this end, we note that
\[
\sum_k\sum_{\ell \in L_k} \kappa_W(\ell) = \frac{1}{\eta}\sum_k |L_k| \cdot \log \frac{1}{W(2^{-n_k})} \leq \sum_k \abs{H_{k-1}} \log \frac{1}{W(2^{-n_k})}.
\]
The $W$-adapted grid assumption in \eqref{EQ:Wgrid} ensures that 
\[
\log \frac{1}{W(2^{-n_k})} \lesssim \log \frac{1}{W(2^{-n_{k-1}})}, \qquad k=1,2,\dots.
\]
With this at hand, we deduce that
\[
\sum_k\sum_{\ell \in L_k} \kappa_W(\ell) \lesssim \sum_k \abs{H_{k}} \log \frac{1}{W(2^{-n_k})} 
\leq \sum_k \frac{1}{\eta} \mu_k(I) \leq \frac{1}{\eta} \mu(\T).
\]
This shows that $H'$ has finite $\kappa_W$-entropy, hence the claim on the support of $\mu_\infty$ follows.
\end{proof}

We will later use the measures $\mu_k$ from the Roberts decomposition to explicitly construct functions $F_n \in H^\infty$ such that $F_n \Theta_\mu \rightarrow 1$ in $A^\infty(W)$.

But first, we shall need a lemma, which previously appeared in \cite{limani2024shift} (see Lemma 2.3), allowing us to selected a $W$-adapted dyadic grid $W$ with some additional property, that will be crucial in proving \thref{THM:CIDR}. Here, we shall make use of the condition \eqref{EQ:LogwDouble} on $W$ being good.

\begin{lemma}\thlabel{LEM:Wadapgrid} Let $W$ be a good weight. Then for any integer $n_0>0$, there exists a sequence of positive integers $(n_k)_{k=0}^\infty$ which gives rise to a $W$-adapted dyadic grid $\cup_{k=0}^\infty \Dy_{n_k}$, and satisfies the additional condition:
\begin{equation}\label{EQ:ProdDouble}
W(2^{-n_{k+1}}) \leq \prod_{j=0}^k W(2^{-n_j}), \qquad k=0,1,2,\dots
\end{equation}

\end{lemma}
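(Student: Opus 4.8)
The plan is to construct the sequence $(n_k)_k$ greedily by recursion, starting from the given $n_0$, and at each stage choosing $n_{k+1}$ to be the smallest integer larger than $n_k$ for which the product condition \eqref{EQ:ProdDouble} holds; the good-weight hypothesis \eqref{EQ:LogwDouble} will then be used to show that this choice does not overshoot too badly, so that the $W$-adapted condition \eqref{EQ:Wgrid} is retained. Concretely, having defined $n_0,\dots,n_k$, set
\[
n_{k+1} := \min\left\{ m > n_k : W(2^{-m}) \leq \prod_{j=0}^k W(2^{-n_j}) \right\}.
\]
This set is nonempty because $W(t)\to 0$ as $t\to 0+$ and the finite product on the right is a fixed positive number, so such $m$ exist; thus $(n_k)_k$ is well-defined and strictly increasing, and \eqref{EQ:ProdDouble} holds by construction.

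The main point is to verify the $W$-adapted condition \eqref{EQ:Wgrid}, i.e.\ that $W(2^{-n_k})^\gamma / W(2^{-n_{k+1}})$ is bounded for a suitable $\gamma>0$. By minimality of $n_{k+1}$, the integer $n_{k+1}-1$ fails the defining inequality, so
\[
W(2^{-(n_{k+1}-1)}) > \prod_{j=0}^k W(2^{-n_j}) \geq W(2^{-n_k})\cdot \prod_{j=0}^{k-1} W(2^{-n_j}).
\]
Now I apply the doubling hypothesis \eqref{EQ:LogwDouble} in the form $\log\frac{1}{W(t/2)} \le C\log\frac{1}{W(t)}$, which gives $W(t/2) \ge W(t)^C$; taking $t = 2^{-(n_{k+1}-1)}$ yields $W(2^{-n_{k+1}}) \ge W(2^{-(n_{k+1}-1)})^C$. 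Combining with the previous display,
\[
W(2^{-n_{k+1}}) \geq W(2^{-(n_{k+1}-1)})^C > \left(\prod_{j=0}^k W(2^{-n_j})\right)^C \geq W(2^{-n_k})^C,
\]
since each factor in the product is at most $1$ (here one also uses, if needed, $\prod_{j=0}^{k}W(2^{-n_j}) \le W(2^{-n_k})$, which is immediate as all factors lie in $(0,1]$). Hence \eqref{EQ:Wgrid} holds with $\gamma = C$, and in fact the supremum in \eqref{EQ:Wgrid} is bounded by $1$.

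The only genuine obstacle is the degenerate case $k=0$, where the product $\prod_{j=0}^{-1}$ is empty; there one simply chooses $n_1 > n_0$ minimal with $W(2^{-n_1}) \le W(2^{-n_0})$, which is automatic for every $n_1 > n_0$ since $W$ is non-decreasing, so one may in fact take $n_1 = n_0+1$, and the estimate above still applies verbatim with the empty product read as $1$. I should also double-check that $C$ may be assumed $\ge 1$ (if the good-weight constant happens to be stated with $C>1$ this is immediate, and in any case $W \le 1$ forces any valid exponent to be $\ge 1$), so that $W(2^{-n_k})^C \le W(2^{-n_k})$ and the boundedness statement is consistent. Apart from this bookkeeping, the argument is a routine greedy construction, so I expect the write-up to be short.
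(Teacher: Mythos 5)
Your greedy construction is viable, but the key display in your verification of \eqref{EQ:Wgrid} contains a reversed inequality: you claim $\left(\prod_{j=0}^k W(2^{-n_j})\right)^C \geq W(2^{-n_k})^C$, justified by the fact that all factors lie in $(0,1]$. That fact gives exactly the opposite: since $W\leq 1$, one has $\prod_{j=0}^{k} W(2^{-n_j}) \leq W(2^{-n_k})$ and hence $\left(\prod_{j=0}^k W(2^{-n_j})\right)^C \leq W(2^{-n_k})^C$; your own parenthetical remark records the inequality in precisely the direction that contradicts your chain. This is not cosmetic: what \eqref{EQ:Wgrid} demands is a lower bound on $W(2^{-n_{k+1}})$ by a fixed power of $W(2^{-n_k})$, and a priori the product $\prod_{j=0}^k W(2^{-n_j})$ could be far smaller than any fixed power of its last factor, so the verification as written does not close.

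The gap is repairable within your scheme. For $k\geq 1$, the product condition \eqref{EQ:ProdDouble} at step $k-1$, which holds by the very definition of $n_k$, gives $\prod_{j=0}^{k-1} W(2^{-n_j}) \geq W(2^{-n_k})$, hence $\prod_{j=0}^{k} W(2^{-n_j}) \geq W(2^{-n_k})^2$; combined with minimality (when $n_{k+1}\geq n_k+2$) and the doubling bound $W(2^{-n_{k+1}}) \geq W(2^{-(n_{k+1}-1)})^C$, this yields $W(2^{-n_{k+1}}) \geq W(2^{-n_k})^{2C}$, so \eqref{EQ:Wgrid} holds with $\gamma=2C$ and constant $1$. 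The case $n_{k+1}=n_k+1$, where $m=n_k$ is not a competitor and minimality says nothing about $n_{k+1}-1$, follows directly from \eqref{EQ:LogwDouble} alone, as does $k=0$; you should state this for general $k$, not only for the empty-product case. With this correction your route is genuinely different from the paper's: there one chooses $n_{k+1}$ so that $\log\frac{1}{W(2^{-n_{k+1}})}$ is comparable to ten times $\log\frac{1}{W(2^{-n_k})}$ (up to a power of the doubling constant), after which \eqref{EQ:ProdDouble} follows by summing a geometric series and \eqref{EQ:Wgrid} from the upper edge of the same window; both arguments invoke \eqref{EQ:LogwDouble} at the same spot, namely to control the loss incurred when rounding the chosen scale to a dyadic integer.
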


\begin{proof}
For the sake of abbreviation, we set $w(t) = \log \frac{1}{W(t)}$ and note that $w$ is non-increasing with $w(t) \uparrow \infty$ as $t \downarrow 0$. According to \eqref{EQ:LogwDouble}, there exists a constant $C=C(W)>1$, such that 
\begin{equation} \label{EQ:wDoubling}
    w(t/2) \leq C w(t), \qquad 0<t<1.
\end{equation}
By means of induction, assume that $n_0<n_1<\dots<n_k$ has been constructed, and pick $0<\delta_k < 2^{-n_k}$ such that 
\[
10 \leq \frac{w(\delta_k)}{w(2^{-n_k})} \leq 10 \cdot C^{9}.
\]
Now choose $n_{k+1} > n_k$ be the unique integer such that $2^{-n_{k+1}} \leq \delta_{k} < 2^{1-n_{k+1}}$, then we again obtain from \eqref{EQ:wDoubling} that 
\[
10 \leq  \frac{w(2^{-n_{k+1}})}{w(2^{-n_k})} \leq 10 \cdot C^{10}.
\]
This construction provides a sequence of positive integers $(n_k)_{k=0}^\infty$, which give rise to a $W$-adapted grid since 
\[
\frac{1}{W(2^{-n_{k+1}})} = \exp  w(2^{-n_{k+1}}) \leq \exp 10\cdot C^{10} w(2^{-n_k}) = \left( \frac{1}{W(2^{-n_k})} \right)^{10 \cdot C^{10} }.
\]
In order to verify that \eqref{EQ:ProdDouble} holds, we observe that an iteration gives 
\[
w(2^{-n_j}) \leq 10^{-1} w(2^{-n_{j+1}})\leq \dots \leq \left(10^{-1} \right)^{k-j +1} w(2^{-n_{k+1}}), \qquad j=0,1, \dots, k.
\]
This implies that
\[
\sum_{j=0}^k w(2^{-n_j}) \leq  w(2^{-n_{k+1}}) \sum_{j=0}^k \left(10^{-1} \right)^{k-j +1} \leq \frac{1}{9}w(2^{-n_{k+1}}).
\]
Expressing this in terms of $W$, conclude that \eqref{EQ:ProdDouble} holds. \qedhere

\end{proof}

\subsection{The main construction}
 Let $W$ be a good weight which satisfies the condition \eqref{EQ:LogDiniReg} and let $\mu$ be a positive finite singular measure with the property that
\[
\mu(K)=0
\]
for any set $K \subset \T$ of finite $\kappa_W$-entropy. For any $n_0>0$ and any $\eta>0$, we may apply \thref{LEM:Wadapgrid} in conjunction with the Roberts decomposition of $\mu$ to find a sequence of positive integers $(n_k)_k$ and positive measures $(\mu_k)_k$ such that the following holds:
\begin{enumerate}
    \item[(i)] $\mu_k(I) \leq \eta \kappa_W (2^{-n_k})$ for any arc $I \subset \T$ of length $|I|\leq 2^{-n_k}$ for $k=0,1,2,\dots$.
    \item[(ii)] $\mu_f = \sum_k \mu_k$.
    \item[(iii)] There exists a large number $\gamma>1$, such that 
    \[
    \sup_{k} \frac{W^{\gamma}(2^{-n_k})}{W(2^{-n_{k+1}})} < \infty.
    \]
\end{enumerate}
For each $k\geq 0$, we define non-negative functions by 
\begin{equation}\label{DEF:fk}
    f_k(\zeta) := \sum_{I \in \Dy_{n_k}} \frac{\mu_k(I)}{|I|}1_I(\zeta), \qquad \zeta \in \T, \qquad k=0,1,2, \dots
\end{equation}
We will ultimately consider the functions of the form
$$
F_{\eta} := \exp \left( H \left( \sum_{k=0}^\infty f_k \right) \right),
$$
where $H$ denotes the Herglotz transform, and $\eta$ is the parameter from the Roberts decomposition. We then show that as $\eta \rightarrow 0$, the sequence $F_\eta \Theta_\mu$ is uniformly bounded and tends to $1$ pointwise in $\D$, which implies convergence (of a subsequence) to $1$ in $A^\infty(W)$, thus proving that $\Theta_\mu$ is cyclic.

To this end, we will study $|F_\eta \Theta_\mu|$, and therefore the Poisson integrals of the real-valued measures $\nu_k$ on $\T$ defined by
\[
d\nu_k = f_k dm - d\mu_k,  \qquad \zeta \in \T, \qquad k=0,1,2, \dots
\]
We start with the following simple lemma.
\begin{lemma}\thlabel{LEM:nukest} For each $k$ and for any arc $I \subset \T$, we have the estimate
\[
\abs{\nu_k(I)} \leq 4\eta \kappa_W(2^{-n_k}).
\]
\end{lemma}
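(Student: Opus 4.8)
The plan is to estimate $\nu_k(I)$ by reducing to the case where $I$ is a dyadic arc at level $n_k$, and then comparing the two contributions $f_k\,dm$ and $\mu_k$ on such arcs. First I would recall that by construction $f_k$ is constant on each $J \in \Dy_{n_k}$, with value $\mu_k(J)/|J|$, so that $\int_J f_k\,dm = \mu_k(J)$ and hence $\nu_k(J) = 0$ for every $J \in \Dy_{n_k}$. Consequently, for a general arc $I$, writing $I$ as a union of a collection of full dyadic arcs from $\Dy_{n_k}$ together with (at most) two ``boundary'' dyadic arcs $J_1, J_2 \in \Dy_{n_k}$ that are only partially covered by $I$, the contributions of the full arcs to $\nu_k(I)$ all vanish, leaving only the contributions coming from $I \cap J_1$ and $I \cap J_2$.

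Next I would bound each boundary term. On $I \cap J_i$ we have $\nu_k(I \cap J_i) = \int_{I\cap J_i} f_k\,dm - \mu_k(I \cap J_i)$; the first piece is at most $\int_{J_i} f_k\,dm = \mu_k(J_i) \leq \eta \kappa_W(2^{-n_k})$ using property (i) of the Roberts decomposition (since $|J_i| = 2\pi 2^{-n_k}$ — or, more carefully, one applies (i) to an arc of length $\leq 2^{-n_k}$; if the paper's dyadic arcs have length $2\pi 2^{-n_k}$ one simply absorbs the harmless factor, or splits $J_i$ into boundedly many subarcs of length $\leq 2^{-n_k}$), and the second piece is a non-negative number bounded by $\mu_k(J_i) \leq \eta \kappa_W(2^{-n_k})$ as well. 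So $|\nu_k(I \cap J_i)| \leq \eta \kappa_W(2^{-n_k})$ for $i = 1,2$. Actually, to be safe about the length normalization, I would allow each boundary arc to be covered by at most two arcs of length $\leq 2^{-n_k}$, which gives the factor $4$ in the final bound: $|\nu_k(I)| \leq |\nu_k(I\cap J_1)| + |\nu_k(I\cap J_2)| \leq 4\eta\kappa_W(2^{-n_k})$.

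There is no serious obstacle here; the only point requiring a little care is the bookkeeping around arcs that are not unions of dyadic arcs (the ``edge'' arcs $J_1,J_2$) and the precise constant, which is why the statement carries the generous constant $4$ rather than $2$. If $\mu_k$ happened to have atoms at the dyadic endpoints one would also note that these can be split between adjacent arcs without affecting the estimate. I would write the argument in a couple of lines: decompose $I$, note the interior dyadic arcs contribute zero, and bound the two leftover pieces by property (i) applied to (sub)arcs of length at most $2^{-n_k}$.

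\begin{proof}
Recall from \eqref{DEF:fk} that $f_k$ is constant on each dyadic arc $J \in \Dy_{n_k}$, equal there to $\mu_k(J)/|J|$; hence $\int_J f_k\,dm = \mu_k(J)$, so $\nu_k(J) = 0$ for every $J \in \Dy_{n_k}$. Given an arbitrary arc $I \subset \T$, let $\mathcal{J}$ be the collection of arcs in $\Dy_{n_k}$ entirely contained in $I$, and let $J_1, J_2 \in \Dy_{n_k}$ be the (at most two) remaining arcs that meet both $I$ and $\T \setminus I$. Then
\[
\nu_k(I) = \sum_{J \in \mathcal{J}} \nu_k(J) + \nu_k(I \cap J_1) + \nu_k(I \cap J_2) = \nu_k(I \cap J_1) + \nu_k(I \cap J_2).
\]
For each $i$ we have, since $0 \leq f_k$ and $\mu_k \geq 0$,
\[
\abs{\nu_k(I \cap J_i)} \leq \max\left( \int_{J_i} f_k\,dm, \ \mu_k(J_i) \right) = \mu_k(J_i).
\]
Splitting $J_i$ into two arcs of length at most $2^{-n_k}$ and applying property (i) of the Roberts decomposition to each, we obtain $\mu_k(J_i) \leq 2\eta\kappa_W(2^{-n_k})$. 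Summing the two boundary contributions gives $\abs{\nu_k(I)} \leq 4\eta\kappa_W(2^{-n_k})$, as claimed.
\end{proof}
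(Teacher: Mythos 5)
Your proof is correct and follows essentially the same route as the paper's: observe that $\nu_k$ vanishes on every arc of $\Dy_{n_k}$, decompose a general arc $I$ into full dyadic arcs plus at most two edge pieces, and bound those pieces via property (i) of the Roberts decomposition, landing on the constant $4$. Your handling of the edge terms is in fact marginally sharper (using $\max$ rather than the sum of the $f_k\,dm$ and $\mu_k$ contributions), and your remark about the $2\pi$ length normalization is a harmless bookkeeping point that the paper itself glosses over.
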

\begin{proof} From the construction of $f_k$ it follows that $\nu_k(I)=0$ for any dyadic arc $I \in \Dy_{n_k}$. Now for an arbitrary arc $I \subset \T$, we can decompose $I$ as a union of disjoint intervals in $\Dy_{n_k}$ together with the intersection of $I$ with the at most two dyadic arcs $I_1, I_2 \in \Dy_{n_k}$ which contain the end-points of $I$. This implies that 
\[
\abs{\nu_k(I)} \leq \int_{I \cap I_1} f_k dm + \int_{I \cap I_2} f_k dm + \mu_k(I_1) + \mu_2(I_2) \leq 4 \eta \kappa_W(2^{-n_k}).
\] 
    
\end{proof}
We denote the Poisson extension of a measure $\nu$ on $\T$ by 
\[
P(\nu)(z) := \int_{\T} \frac{1-|z|^2}{|\zeta-z|^2}d\nu(\zeta), \qquad z\in \D.
\]
Our next lemma allows us to transform estimates of $\nu_k$ to growth estimates on their Poisson extension $P(\nu_k)$.
\begin{lemma}\thlabel{LEM:Poisest} There exists an absolute constant $C>0$, such that 
\[
P(\nu_k)(z) \leq C \eta \kappa_W(2^{-n_k}) \min \left( 2^{n_k}, \frac{1}{1-|z|} \right), \qquad z\in \D.
\]
\end{lemma}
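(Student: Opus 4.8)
The key realization is that the claimed inequality is \emph{one-sided}, and this is essential: since $\mu_k \geq 0$, the Poisson integral $P(\mu_k)$ can genuinely be of size $\asymp \eta\kappa_W(2^{-n_k})/(1-|z|)$ near a point mass of $\mu_k$, even when $1-|z| \ll 2^{-n_k}$, so no two-sided bound of the stated form can hold. What saves us is precisely that $P(\mu_k) \geq 0$ enters $P(\nu_k) = P(f_k\,dm) - P(\mu_k)$ with a favorable sign. Writing $p_z(\zeta) := (1-|z|^2)/|\zeta-z|^2$ and using $\int_\T p_z\,dm = 1$, we thus get
\[
P(\nu_k)(z) \leq P(f_k\,dm)(z) = \int_\T p_z(\zeta)\, f_k(\zeta)\, dm(\zeta) \leq \sup_{\zeta \in \T} f_k(\zeta).
\]
By \eqref{DEF:fk} and property (i) of the Roberts decomposition, $f_k$ is constant on each $I \in \Dy_{n_k}$ with value $\mu_k(I)/|I| \lesssim \eta\kappa_W(2^{-n_k})/2^{-n_k} = \eta\, 2^{n_k}\kappa_W(2^{-n_k})$. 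This already yields the bound with the factor $2^{n_k}$, and in particular settles the regime $1-|z| \leq 2^{-n_k}$, where $\min(2^{n_k},1/(1-|z|)) = 2^{n_k}$.

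For $1-|z| \geq 2^{-n_k}$ we must exploit cancellation. By construction of $f_k$, the signed measure $\nu_k$ has vanishing mass on every dyadic arc $I \in \Dy_{n_k}$; moreover $f_k\,dm$ and $\mu_k$ are mutually singular, so $|\nu_k|(I) = \int_I f_k\,dm + \mu_k(I) = 2\mu_k(I) \lesssim \eta\kappa_W(2^{-n_k})$ for such $I$. Decomposing $\T$ into the arcs of $\Dy_{n_k}$ and replacing $p_z$ on each $I$ by its value at one point of $I$ (permissible since $\nu_k(I) = 0$),
\[
|P(\nu_k)(z)| \;\leq\; \sum_{I \in \Dy_{n_k}} \Big(\sup_{\zeta,\zeta' \in I}\big|p_z(\zeta) - p_z(\zeta')\big|\Big)\,|\nu_k|(I) \;\lesssim\; \eta\kappa_W(2^{-n_k})\sum_{I \in \Dy_{n_k}} \operatorname{osc}_I(p_z),
\]
where $\operatorname{osc}_I(p_z)$ denotes the oscillation of $p_z$ over $I$. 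Parametrizing $\T$ by arc-length distance $s$ to $\zeta_0 := z/|z|$ and setting $h := 1-|z|$, one has $p_z \asymp h/(h^2+s^2)$ with $|\partial_s p_z| \lesssim h(h+s)/(h^2+s^2)^2$. Since each $I$ has length $\asymp 2^{-n_k} \leq h$, the partition is finer than the scale on which $p_z$ varies, and a standard Riemann-sum comparison gives $\sum_I \operatorname{osc}_I(p_z) \lesssim \int_0^\pi \tfrac{h(h+s)}{(h^2+s^2)^2}\,ds \lesssim 1/h$, with the $O(1)$ arcs adjacent to $\zeta_0$ (where $p_z \lesssim 1/h$) contributing $\lesssim 1/h$ as well. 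Hence $|P(\nu_k)(z)| \lesssim \eta\kappa_W(2^{-n_k})/(1-|z|)$ in this regime.

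Combining the two estimates and using that $\min(2^{n_k}, 1/(1-|z|))$ equals $2^{n_k}$ when $1-|z| \leq 2^{-n_k}$ and $1/(1-|z|)$ otherwise yields the lemma. I expect the one-sidedness observed at the outset to be the only conceptually delicate point; the remainder is a routine dyadic decomposition of the Poisson kernel against a measure with vanishing dyadic averages, the only bookkeeping requiring a little care being the separate treatment of the finitely many dyadic arcs nearest $\zeta_0$ in the oscillation sum.
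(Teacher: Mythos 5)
Your proof is correct and follows essentially the same route as the paper: the $2^{n_k}$ branch is obtained identically by discarding $-P(\mu_k)$ and bounding $P(f_k\,dm)$ by $\sup_\T f_k \lesssim \eta\, 2^{n_k}\kappa_W(2^{-n_k})$, and the $1/(1-|z|)$ branch rests on the same cancellation-plus-kernel-smoothness estimate, the derivative bound $(1-|z|^2)\abs{\partial_s p_z}\lesssim (1-|z|^2)\abs{\zeta-z}^{-3}$ integrating to $O(1/(1-|z|))$. The only real difference is bookkeeping: you exploit the cancellation blockwise, using $\nu_k(I)=0$ and $\abs{\nu_k}(I)\le 2\mu_k(I)$ on the dyadic arcs and summing oscillations of $p_z$, whereas the paper integrates by parts against $t\mapsto \nu_k(I(e^{it}))$ and invokes \thref{LEM:nukest} for arbitrary arcs, so your version bypasses \thref{LEM:nukest}.
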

\begin{proof}
We primarily note that since $\nu_k(\T)=0$, an integration by parts gives 
\[
P(\nu_k)(z) = \int_{0}^{2\pi} \frac{1-|z|^2}{\abs{e^{it}-z}^2}d\nu_k(e^{it}) = (1-|z|^2) \int_{0}^{2\pi} \nu_k(I(e^{it}) ) \frac{d}{dt} \abs{e^{it}-z}^{-2} dt,\qquad z\in \D,
\]
where $I(e^{it})$ smallest closed arc, connecting $1$ to $e^{it}$. We make the following two observations. First, it is straightforward to verify that there exists a numerical constant $C>0$, such that 
\[
\abs{\frac{d}{dt} \abs{e^{it}-z}^{-2}} \leq C\abs{e^{it}-z}^{-3}, \qquad z\in \D.
\]
Applying this observation in conjunction with \thref{LEM:nukest}, we get
\[
\abs{P(\nu_k)(z)} \leq C (1-|z|^2) \int_{0}^{2\pi} \eta \kappa_W(2^{-n_k}) \frac{dt}{\abs{e^{it}-z}^3} \leq 4C\eta \kappa_W(2^{-n_k}) \frac{1}{1-|z|}, \qquad z\in \D.
\]
In the last step utilized standard Poisson estimates, for instance, see Theorem 1.7 in \cite{hedenmalmbergmanspaces}. On the other hand, the definition of $\nu_k$ and $\mu_k$ yields the estimate
\[
P(\nu_k)(z) \leq P(f_k)(z) \leq \sum_{I\in \Dy_{n_k}} \eta 2^{n_k} \kappa_W(2^{-n_k}) P(1_I)(z) = \eta 2^{n_k} \kappa_W(2^{-n_k}), \qquad z\in \D.
\]
This completes the proof. 
\end{proof}
We are now ready to carry out the proof of the main result. 
\begin{proof}[Proof of \thref{THM:CIDR}] 
Let $\mu$ be a positive finite Borel measure on $\T$, which is singular with respect to $dm$ on $\T$, with the property that 
\[
\mu(E)=0
\]
for any subset $E \subset \T$ of finite $\kappa_W$-entropy, and let $\Theta = \Theta_\mu$ denote the associated singular inner factor. To avoid redundancy, we make two simple observations. Note that if we prove the theorem for $A^\infty(W)$, then it also holds for $A^\infty(W^\gamma)$ since the $\kappa_W$-entropy condition is invariant under power transformations of the weight $W$. Furthermore, by Lemma \ref{LEM:cptemb} $A^\infty(W^{1/p}) \hookrightarrow A^p(W)$, and so it suffices to only carry out the proof for $A^\infty(W)$. Fix $n_0>0$ (this parameter will not play a role) and let $\eta >0$, to be specified later. According to \thref{LEM:Wadapgrid}, there exists positive integers $(n_k)_{k=0}^\infty$ which give rise to an $W$-adapted dyadic grid $\cup_{k=0}^\infty \Dy_{n_k}$. We now invoke Roberts decomposition with the above parameters. 
\proofpart{1}{Estimates in lacunary discs:} Note that an application of \thref{LEM:Poisest} implies that
 
\[
\sum_j P(\nu_j)(z) \leq C \eta \sum_{0\leq j \leq k} 2^{n_j} \kappa_W(2^{-n_j}) + \frac{C\eta }{1-|z|} \sum_{j>k} \kappa_W(2^{-n_j}), \qquad 1-|z| = 2^{-n_k}. 
\]
Note that the first term can be estimated using the assumption \eqref{EQ:ProdDouble} of \thref{LEM:Wadapgrid}: 
\[
\sum_{0\leq j \leq k} 2^{n_j} \kappa_W(2^{-n_j}) = \sum_{0\leq j \leq k} \log \frac{1}{W(2^{-n_j})} \leq \log \frac{1}{W(2^{-n_{k+1}})}, \qquad k=0,1,2,\dots
\]
While for the second term, we now utilize the Dini-regularity condition \eqref{EQ:LogDiniReg} of the weight $W$, which implies
\[
\sum_{j>k} \kappa_W(2^{-n_j}) \lesssim \sum_{j>k} \int_{2^{-n_{j+1}}}^{2^{-n_j}} \log \frac{1}{W(t)} dt = \int_{0}^{2^{-n_{k+1}}} \log \frac{1}{W(t)} dt \lesssim \kappa_W(2^{-n_{k+1}}), \qquad k=0,1,2,\dots
\]
Invoking harmonicity and the maximum principle, we actually get that 
\begin{equation}\label{EQ:Anest}
\sum_j P(\nu_j)(z) \leq C\eta 2^{n_{k+1}} \kappa_W(2^{-n_{k+1}}) = C \eta \log \frac{1}{W(2^{-n_{k+1}})}, \qquad |z| \leq 1-2^{-n_{k}}.
\end{equation}
\proofpart{2}{Uniformly bounded growth:} Fix a large integer $N>0$ and consider the bounded outer functions
\[
F_{N} := \exp \left( H \left( \sum_{k=0}^N f_k \right) \right) =  \exp \left( \sum_{k=0}^N \sum_{I \in \Dy_{n_k}} \frac{\mu_k(I)}{|I|} H (1_I) \right), 
\]
where $H$ denotes the Herglotz transform. Here the truncation by $N>0$ is just to ensure that the $F_N$'s are bounded, and note also that the $F_{N}$'s also depend on $\eta$, the parameter in the precise Roberts decomposition of $\mu$. Let $\Theta$ be the singular inner function with associated singular measure $\mu$, and fix $\rho>0$. We claim that there exists a constant $C>0$, independent of $N$ and $\eta >0$, such that
\begin{equation} \label{EQ:FNest}
\sup_{z\in \D} \, \abs{F_N(z) \Theta(z)} W(1-|z|)^{\rho} \leq C.
\end{equation}
To this end, note that
\[
\abs{F_N (z)\Theta(z)} \leq  \exp \left( \sum_{0\leq k \leq N } P(\nu_k)(z) \right) , \qquad z\in \D.
\]
Now on each annuli $R_k := \{2^{-n_{k+1}} < 1-|z| \leq 2^{-n_k} \}$, we have, according to \eqref{EQ:Anest}, the following estimate:
\[
\sup_{z\in R_k} \exp \left( \sum_{0\leq k \leq N } P(\nu_k)(z) \right) W(1-|z|)^{\rho} \leq  \frac{W(2^{-n_k})^{\rho}}{W(2^{-n_{k+1}})^{\eta C}} \leq C'.
\]
where in the last line we utilize that the assumption that $(n_k)_k$ gives rise to a $W$-adapted dyadic grid, which ensures that $C'>0$ does not depend on $k$. This holds whenever the parameter $\eta>0$ is sufficiently small, since the constant $C>0$ is universal. On the other hand, inside that disc $|z| \leq 1-2^{-n_0}$, we have 
\begin{multline*}
\sup_{|z| \leq 1-2^{-n_0}} \abs{F_N(z) \Theta(z)} W(1-|z|)^{\rho} \leq \sup_{|z| \leq 1-2^{-n_0}} \exp \left(\sum_{0\leq k \leq N } P(\nu_k)(z) \right) W(1-|z|)^{\rho}   \\ \leq
\sup_{|z| \leq 1-2^{-n_0}} \exp \left( \frac{\eta C}{1-|z|} \sum_{0\leq k \leq N} \kappa_W(2^{-n_k}) \right) = \exp \left( \eta 2^{n_0} C \sum_{0\leq k \leq N} \kappa_W(2^{-n_k}) \right)  . 
\end{multline*}
Here we estimated $W$ by a constant, since it is not decaying inside the disc $|z|\leq 1-2^{-n_0}$. Invoking the Dini-regularity assumption on $\kappa_W$ in \eqref{EQ:LogDiniReg} once again (here we actually only need the logarithmic integrability of $W$), we find that 
\[
\sum_{k=0}^\infty \kappa_W(2^{-n_k})\lesssim  \sum_{k=0}^\infty \int_{2^{-n_{k+1}}}^{2^{-n_k}} \log \frac{1}{W(t)} dt \leq \int_{0}^{1} \log \frac{1}{W(t)} dt \leq c,
\]
where $c>0$ is a constant only depending on $W$. Consequently, we obtain  
\[
\sup_{|z| \leq 1-2^{-n_0}} \abs{F_N(z) \Theta(z)} W(1-|z|)^{\rho} dA(z) \lesssim \exp ( \eta 2^{n_0} Cc ) .
\]
This proves \eqref{EQ:FNest}. Letting $N \to \infty$, we arrive at 
\[
\sup_{z\in \D} \abs{F_{\eta}(z)\Theta(z)}W(1-|z|)^{\rho} \leq C,
\]
where $C>0$ is independent of $0<\eta<1$ small enough, and 
\[
F_{\eta}(z) = \exp \left( \sum_{k=0}^\infty \sum_{I \in \Dy_{n_k}} \frac{\mu_k(I)}{|I|} H (1_I)(z) \right), \qquad z\in \D.
\]

\proofpart{3}{Convergence in norm:}
In order to complete the proof, we shall need one more lemma:

\begin{lemma}\thlabel{LEM:CONVFeta} The measure 
$\lambda_{\eta} := \sum_{k=0}^\infty \sum_{I \in \Dy_{n_k}} \frac{\mu_k(I)}{|I|} 1_I$ converges to $\mu$ weak-star in the topology of measures $M(\T)$ as $\eta \to 0$.
    
\end{lemma}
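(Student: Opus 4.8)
The plan is to verify weak-star convergence by testing against a dense subclass of $C(\T)$, after first observing that the family $\{\lambda_\eta\}_{\eta>0}$ is bounded in $M(\T)$. Indeed, since $\mu$ charges no set of finite $\kappa_W$-entropy, the Roberts decomposition of \thref{THM:ROBDEC} applies with $\mu_\infty\equiv 0$, so $\mu=\sum_k\mu_k$; and since $\int_\T f_k\,dm=\sum_{I\in\Dy_{n_k}}\mu_k(I)=\mu_k(\T)$, one gets $\lambda_\eta(\T)=\sum_k\mu_k(\T)=\mu(\T)$ for every $\eta$. In particular $\norm{\lambda_\eta-\mu}_{M(\T)}\le 2\mu(\T)$ uniformly in $\eta$, so a routine $3\varepsilon$-argument reduces matters to showing $\int_\T\varphi\,d\lambda_\eta\to\int_\T\varphi\,d\mu$ for $\varphi$ Lipschitz (equivalently, for the trigonometric monomials $\zeta\mapsto\zeta^n$).

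The heart of the matter is that $f_k\,dm$ and $\mu_k$ carry the same mass on every dyadic arc $I\in\Dy_{n_k}$, hence agree at scale $2^{-n_k}$. Concretely, for Lipschitz $\varphi$ with constant $L$ and any $\zeta_I\in I$, subtracting the constant $\varphi(\zeta_I)$ gives
\[
\Bigl|\int_I\varphi\, f_k\,dm-\int_I\varphi\,d\mu_k\Bigr|=\Bigl|\int_I(\varphi-\varphi(\zeta_I))\,f_k\,dm-\int_I(\varphi-\varphi(\zeta_I))\,d\mu_k\Bigr|\le 2L\abs{I}\mu_k(I).
\]
Summing over $I\in\Dy_{n_k}$ (recall $\abs{I}\asymp 2^{-n_k}$), and then over $k$ — the interchange of sum and integral being legitimate since $\sum_k\norm{f_k}_{L^1(\T)}=\sum_k\mu_k(\T)=\mu(\T)<\infty$ — I expect to arrive at
\[
\Bigl|\int_\T\varphi\,d\lambda_\eta-\int_\T\varphi\,d\mu\Bigr|\lesssim L\sum_{k\ge 0}2^{-n_k}\mu_k(\T).
\]

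To close the estimate I would bound $\mu_k(\T)\lesssim 2^{n_k}\eta\,\kappa_W(2^{-n_k})$ by covering $\T$ with $O(2^{n_k})$ arcs of length $2^{-n_k}$ and invoking property (i) of \thref{THM:ROBDEC}; this turns the previous display into
\[
\Bigl|\int_\T\varphi\,d\lambda_\eta-\int_\T\varphi\,d\mu\Bigr|\lesssim L\,\eta\sum_{k\ge 0}\kappa_W(2^{-n_k})\lesssim L\,\eta\int_0^1\log\tfrac{1}{W(t)}\,dt,
\]
the last sum being finite by the logarithmic integrability of $W$ — exactly the estimate already used in the proof of \thref{THM:CIDR}. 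Letting $\eta\to 0$ then finishes the argument. The one point to be careful about, and the only place where the structure of the construction genuinely matters, is that this final bound be uniform in $\eta$: this holds because the dyadic grid $(n_k)_k$ furnished by \thref{LEM:Wadapgrid} depends only on $W$ and $n_0$ and not on $\eta$, so $\sum_k\kappa_W(2^{-n_k})$ is one fixed finite constant while the entire $\eta$-dependence is carried by the masses $\mu_k(\T)=O(\eta)$. I do not anticipate any obstacle beyond this bookkeeping.
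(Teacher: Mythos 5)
Your proof is correct, and its core cancellation is the same as in the paper's argument: since $\int_I f_k\,dm=\mu_k(I)$ for every $I\in\Dy_{n_k}$, subtracting the value of the test function at a point of $I$ bounds each term by the oscillation of the test function at scale $2^{-n_k}$ times $\mu_k(I)$. Where you genuinely differ is in the bookkeeping over $k$: you test against Lipschitz functions, obtain the quantitative bound $\abs{\int_\T\varphi\,d(\lambda_\eta-\mu)}\lesssim L\,\eta\sum_k\kappa_W(2^{-n_k})$, and then pass to all of $C(\T)$ by density together with the uniform mass bound $\lambda_\eta(\T)=\mu(\T)$. This requires $\sum_k\kappa_W(2^{-n_k})<\infty$, i.e.\ the Dini-regularity \eqref{EQ:LogDiniReg} (in fact only the logarithmic integrability of $W$), which is available in the context of \thref{THM:CIDR}, and it buys you an explicit $O(L\eta)$ rate. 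The paper instead works directly with an arbitrary continuous $\psi$: uniform continuity controls all scales with $n_k>M$ at once, with total error $2\varepsilon\mu(\T)$ using only $\sum_k\mu_k(\T)=\mu(\T)$, while the finitely many scales $n_k\le M$ are handled by $\mu_k(\T)\le\eta\log\frac{1}{W(2^{-n_k})}\to 0$ as $\eta\to 0$; this avoids both the density step and any appeal to \eqref{EQ:LogDiniReg} inside the lemma, so it is marginally more general, whereas yours is more quantitative. Both arguments hinge on the grid $(n_k)_k$ from \thref{LEM:Wadapgrid} being independent of $\eta$, a point you correctly single out, and your reduction to $\mu_\infty\equiv 0$ via the last clause of \thref{THM:ROBDEC} is exactly how the measures $\mu_k$ are produced in the proof of \thref{THM:CIDR}.
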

\begin{proof}
Fix a number $\varepsilon>0$ and a continuous function $\psi$ on $\T$. Then for $M>1$ large enough, uniform continuity of $\psi$ ensures that 
\begin{equation}\label{EQ:uniest1}
\sup_{|\zeta-\xi|\leq 1/n_k } \abs{\psi(\zeta)-\psi(\xi)} \leq \varepsilon, \qquad n_k >M.
\end{equation}
Recall that $\nu_k := \sum_{I \in \Dy_{n_k}} \left( \frac{\mu_k(I)}{|I|} 1_I - \mu_k\right)$ and note that
\[
\abs{\int_{\T} \psi d\nu_k } \leq \sum_{I\in \Dy_{n_k}} \abs{ \frac{\mu_k(I)}{|I|}\int_I \psi dm - \int_I \psi d\mu_k}.
\]
If $\xi_I$ denotes the center of each arc $I \in \Dy_{n_k}$, then for $n_k>M$:
\[
\abs{ \frac{\mu_k(I)}{|I|}\int_I \psi dm - \int_I \psi d\mu_k} \leq \frac{\mu_k(I) }{|I|} \int_I \abs{\psi- \psi(\xi_I)} dm + \int_I \abs{\psi-\psi(\xi_I)} d\mu_k \leq 2 \varepsilon \mu_k(I).
\]
Summing over all $I\in \Dy_{n_k}$, we get 
\[
\abs{\int_{\T} \psi d\nu_k } \leq \sum_{I \in \Dy_{n_k}} 2 \varepsilon \mu_k(I) \leq 2 \varepsilon \mu_k(\T), \qquad n_k >M.
\]
Now recall that the estimate $\mu_k$ on each $I \in \Dy_{n_k}$ implies that
\[
\mu_k(\T) = \sum_{I \in \Dy_{n_k}} \mu_k(I) \leq \eta 2^{n_k} \kappa_W(2^{-n_k})= \eta \log \frac{1}{W(2^{-n_k})}, \qquad k=0,1,2,3, \dots
\]
From this, it follows that
\[
\abs{\int_{\T} \psi d\nu_k } \leq \norm{\psi}_\infty \norm{\nu_k} \leq 2\norm{\psi}_\infty \mu_k(\T) \leq 2\eta  \norm{\psi}_\infty \log \frac{1}{W(2^{-n_k})}.
\]
With this at hand, we may write 
\[
\nu_{\eta}:= \lambda_{\eta} - \mu = \sum_{k} \nu_k.
\]
Applying these estimates in conjunction with \eqref{EQ:uniest1}, we get
\begin{multline*}
\abs{\int_{\T} \psi d\nu_{\eta} } \leq \sum_{k \text{: } n_k \leq M}  \abs{\int_{\T} \psi d\nu_k} + \sum_{k \text{: } n_k > M}  \abs{\int_{\T} \psi d\nu_k} \\
\leq  2\eta \norm{\psi}_{\infty} \sum_{k \text{: } n_k \leq M}\log \frac{1}{W(2^{-n_k})}  + 2\varepsilon \mu(\T).
\end{multline*}
Letting $\eta \to 0$ finishes the proof. \qedhere

\end{proof}
Finally, observe that $A^\infty(W^{\rho})$ is compactly contained in $A^\infty(W)$ whenever $0<\rho<1$, hence we can find a subsequence $(\eta_n)_n$ tending to zero, such that $F_{\eta_n}\Theta$ converges in $A^\infty(W)$. However, since $F_{\eta_n} \Theta \to 1$ pointwise in $\D$ by \thref{LEM:CONVFeta}, we conclude that 
\[
\lim_n \sup_{z\in \D} W(1-|z|) \abs{F_{\eta_n}(z)\Theta(z)-1}=0.
\]
This completes the proof of the theorem. \qedhere

\end{proof}

%
%
%
%

\section{Logarithmic integral divergence}\label{SEC:4}

\subsection{Cyclicity of inner functions}
As in the previous subsection, \thref{LEM:CycInner} allows us to reduce \thref{THM:CYCFast} to proving the following result, which is the main purpose of this section.

\begin{thm}\thlabel{THM:CIDD} Let $W$ be a good weight satisfying the condition \eqref{EQ:LogDiniDiv}. Then any singular inner function $\Theta_{\mu}$ is cyclic in $A^\infty(W)$.
\end{thm}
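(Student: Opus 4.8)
The plan is to mimic the three-step structure of the proof of \thref{THM:CIDR}, but replace the Dini-regularity estimates with ones coming from the divergence condition \eqref{EQ:LogDiniDiv}. As before, by \thref{LEM:cptemb} it suffices to treat $A^\infty(W)$, and to construct bounded outer functions $F_\eta$ with $F_\eta\Theta_\mu$ uniformly bounded in $A^\infty(W)$ (for all small $\eta$) and converging pointwise to $1$ as $\eta\to 0$; compactness of the embedding $A^\infty(W^\rho)\hookrightarrow A^\infty(W)$ then upgrades pointwise convergence to norm convergence along a subsequence. First I would apply \thref{THM:ROBDEC}, but this time using that \eqref{EQ:LogDiniDiv} forces $\mu$ to assign no mass to any set of finite $\kappa_W$-entropy (since $\kappa_W$ not Dini-continuous means $\sum_k\kappa_W(\ell_k)=\infty$ for every Beurling--Carleson-type set, so the only such $\mu$-null... — more carefully: the divergence of $\int_0^1\log W$ is exactly what makes $\mu_\infty\equiv 0$ automatic for \emph{every} singular $\mu$, because a set of finite $\kappa_W$-entropy must then have empty interior complement summing to a divergent series, forcing the set to be all of $\T$ up to measure zero, hence carry no mass). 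This is the key structural simplification: in the fast regime, \emph{all} singular inner functions are covered, with $\mu=\sum_k\mu_k$.

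The second and main step is the uniform bound \eqref{EQ:FNest}. Defining $f_k,\nu_k,F_N$ exactly as in Section 3, \thref{LEM:Poisest} still gives $P(\nu_k)(z)\le C\eta\,\kappa_W(2^{-n_k})\min(2^{n_k},1/(1-|z|))$. On the annulus $1-|z|\asymp 2^{-n_k}$ the ``near'' terms $\sum_{0\le j\le k}2^{n_j}\kappa_W(2^{-n_j})=\sum_{0\le j\le k}\log\frac{1}{W(2^{-n_j})}\le\log\frac{1}{W(2^{-n_{k+1}})}$ are controlled by \eqref{EQ:ProdDouble} from \thref{LEM:Wadapgrid} exactly as before — this part does not use Dini-regularity. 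The difference is the ``far'' tail $\frac{1}{1-|z|}\sum_{j>k}\kappa_W(2^{-n_j})$: here I can no longer bound the tail sum by $\kappa_W(2^{-n_{k+1}})$. Instead I would exploit the freedom in choosing the $W$-adapted grid. In \thref{LEM:Wadapgrid} the ratios $w(2^{-n_{k+1}})/w(2^{-n_k})$ were pinned between $10$ and $10C^{10}$; I expect one can instead let these ratios grow (e.g. demand $w(2^{-n_{k+1}})\ge 2^{k}w(2^{-n_k})$, or more flexibly choose $n_k$ so that $\sum_{j>k}\kappa_W(2^{-n_j})\le \varepsilon_k\,\kappa_W(2^{-n_{k+1}})$ with $\varepsilon_k\to 0$), using precisely that $\int_0^1\log\frac1W=\infty$ to make the telescoped integrals $\int_0^{2^{-n_{k+1}}}\log\frac1W\,dt$ negligible against $2^{-n_{k+1}}\log\frac{1}{W(2^{-n_{k+1}})}$ — wait, that ratio need not be small pointwise, so more honestly: \eqref{EQ:LogDiniDiv} only tells us $\sum_k\kappa_W(2^{-n_k})$ can be made to \emph{diverge}, which is the opposite of what the tail estimate wants. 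The correct resource is different: in the fast regime one does \emph{not} need the refined annular estimate at all; it suffices that on $R_k$ we have $\exp(\sum_{0\le k\le N}P(\nu_k)(z))\,W(1-|z|)^\rho\le W(2^{-n_k})^\rho/W(2^{-n_{k+1}})^{\eta C+ (\text{tail contribution})}$, and the tail contribution $\eta C\cdot 2^{n_{k+1}}\sum_{j>k}\kappa_W(2^{-n_j})$ can be absorbed by choosing the grid so sparse that $2^{n_{k+1}}\sum_{j>k}\kappa_W(2^{-n_j})\lesssim \log\frac{1}{W(2^{-n_{k+1}})}$, which is automatic once $\sum_{j>k}\kappa_W(2^{-n_j})\lesssim \kappa_W(2^{-n_{k+1}})$ — and \emph{this} last inequality, rather than following from Dini-regularity, should be arranged directly by a greedy choice of $n_{k+1}$ large enough (possible for \emph{any} weight, Dini or not, since $\kappa_W(t)\to0$). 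So the divergence hypothesis is used only to guarantee $\mu_\infty\equiv0$, while the uniform bound goes through for any good weight with $\kappa_W(t)\to0$ by a suitable grid choice.

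The third step, convergence in norm, is then verbatim: \thref{LEM:CONVFeta} shows $\lambda_\eta\to\mu$ weak-star, hence $\sum_k f_k\,dm - \mu \to 0$ weak-star, hence $\Re H(\sum f_k) - P(\mu)\to 0$ locally uniformly in $\D$, so $F_\eta\Theta_\mu\to 1$ pointwise; combined with the uniform $A^\infty(W^\rho)$-bound and \thref{LEM:cptemb}, a subsequence converges to $1$ in $A^\infty(W)$, proving $\Theta_\mu$ cyclic. I expect the main obstacle to be the bookkeeping in Step 2: one must verify that the grid furnished by \thref{LEM:Wadapgrid} (or a mild modification of its proof that keeps the $W$-adapted property \eqref{EQ:Wgrid} and \eqref{EQ:ProdDouble} while additionally forcing $\sum_{j>k}\kappa_W(2^{-n_j})\lesssim\kappa_W(2^{-n_{k+1}})$) simultaneously satisfies all three constraints, and then to see cleanly where \eqref{EQ:LogDiniDiv} enters — namely, in ruling out a nonzero $\mu_\infty$ via the Roberts decomposition, so that no exceptional-set hypothesis on $\mu$ is needed.
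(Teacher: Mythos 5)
Your plan founders at its very first structural claim: that \eqref{EQ:LogDiniDiv} forces $\mu_\infty\equiv 0$ in the Roberts decomposition for \emph{every} singular $\mu$, because allegedly no nontrivial set of finite $\kappa_W$-entropy exists in the fast regime. This is false. Any \emph{finite} set $K\subset\T$ has finite $\kappa_W$-entropy for every weight, since the complement $\T\setminus K$ has only finitely many components and the entropy sum has finitely many terms. In particular $\mu=\delta_{\zeta_0}$ charges the finite-entropy set $\{\zeta_0\}$, and atomic singular inner functions are precisely the historically hard case of \thref{THM:CIDD} (Beurling, Nikolskii, El-Fallah--Kellay--Seip). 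So the hypothesis of the last clause of \thref{THM:ROBDEC} simply fails for the measures the theorem is really about, and the reduction ``divergence $\Rightarrow$ $\mu_\infty\equiv0$'' has no content. Worse, your proposed repair of Step 2 is in tension with Step 1: to control the tail you want a grid so sparse that $\sum_{j>k}\kappa_W(2^{-n_j})\lesssim\kappa_W(2^{-n_{k+1}})$, hence $\sum_k\kappa_W(2^{-n_k})<\infty$; but the Roberts construction can extract at most $\eta\,\kappa_W(2^{-n_k})$ of mass near a given point at stage $k$, so an atom of mass exceeding $\eta\sum_k\kappa_W(2^{-n_k})$ can never be exhausted and necessarily survives in $\mu_\infty$. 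The two halves of your argument cannot hold simultaneously, and you also correctly noticed (mid-proposal) that \eqref{EQ:LogDiniDiv} pushes the partial sums of $\kappa_W(2^{-n_k})$ toward divergence, i.e.\ in the direction opposite to what your tail estimate needs.

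The paper's actual proof of \thref{THM:CIDD} abandons the Roberts decomposition altogether. Instead the whole measure is spread over scales by \emph{scalars}: one sets $\phi_k=\varepsilon_k\sum_{I\in\Dy_{n_k}}\frac{\mu(I)}{|I|}1_I$ with $\sum_{j=0}^N\varepsilon_j=1$, so the estimate $\abs{\nu_k(J)}\leq 2\varepsilon_k$ comes from the factor $\varepsilon_k$ and $\mu(\T)\leq1$, not from any Carleson-type smallness of pieces of $\mu$. The feasibility of choosing $(\varepsilon_k)$ summing to $1$ while keeping $\sum_{j\leq k}n_j\varepsilon_j\lesssim\log\frac{1}{W(1/n_k)}$ and $\sum_{j>k}\varepsilon_j\lesssim\frac{1}{n_{k+1}}\log\frac{1}{W(1/n_k)}$ is exactly a linear program (\thref{LEM:LINPROG}) whose solvability, uniformly in the sparseness parameter $A$ and in $N$, is equivalent to the divergence condition via the discretization \thref{LEM:DISDINI}; this is where \eqref{EQ:LogDiniDiv} genuinely enters. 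The endgame also differs from your sketch: the uniform bound is only in $A^\infty(W^\rho)$ for some possibly large $\rho>2c_0$, so one first gets cyclicity of a small root $\Theta^{1/2M}$ and then invokes \thref{LEM:bddcyc} to recover cyclicity of $\Theta$ itself. If you want to salvage your outline, the piece to replace is precisely the decomposition: in the fast regime the ``whittling'' must be by weights on the whole measure, not by a Korenblum--Roberts splitting of its support.
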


\subsection{Reformulating the logarithmic integral divergence}
Here we gather the main technical lemmas required to prove \thref{THM:CYCFast}. Our first lemma is essentially a discretized reformulation of condition \eqref{EQ:LogDiniDiv}, inspired by Lemma 2.1 in \cite{el2012cyclicity}.

\begin{lemma}\thlabel{LEM:DISDINI} Let $W$ be a good weight. Then $W$ satisfies the condition \eqref{EQ:LogDiniDiv} if and only if, for any $A>1$, there exists positive integers $(n_k)_k$ which satisfy the following conditions:
\begin{enumerate}
    \item[(i)] $n_{k+1}\geq A n_k$, $k=0,1,2,\dots$,
    \item[(ii)] $\sum_{j=0}^k \log \frac{1}{W(1/n_j)} \leq \log \frac{1}{W(1/n_k)} $, for $k=0,1,2,\dots$,
    \item[(iii)] $\sum_j \frac{1}{n_{j+1}} \log \frac{1}{W(1/n_j)} = +\infty$.
\end{enumerate}
\end{lemma}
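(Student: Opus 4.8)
The plan is to derive both implications from a single elementary estimate, so I begin by setting $w(t):=\log\frac{1}{W(t)}$; then $w$ is continuous and non-increasing on $(0,1]$ with $w(t)\to+\infty$ as $t\to 0+$, and \eqref{EQ:LogDiniDiv} says exactly that $\int_0^1 w(t)\,dt=+\infty$. Since $w$ is bounded on every $[\delta,1]$, this is equivalent to $\int_0^{1/n_0}w(t)\,dt=+\infty$ for any fixed integer $n_0$. Now, for any strictly increasing sequence of integers $(n_k)$ with $n_{k+1}\ge 2n_k$, set $I_k:=(1/n_{k+1},1/n_k]$; the $I_k$ are pairwise disjoint, tile $(0,1/n_0]$, and contain $(1/n_{k+1},2/n_{k+1}]$, so monotonicity of $w$ gives
\[
\int_{I_k} w(t)\,dt\;\ge\;\int_{1/n_{k+1}}^{2/n_{k+1}}w(t)\,dt\;\ge\;\frac{1}{n_{k+1}}\,w\!\left(\tfrac{2}{n_{k+1}}\right)\;\ge\;\frac{w(1/n_k)}{n_{k+1}},
\]
the last step because $2/n_{k+1}\le 1/n_k$. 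Summing over $k$,
\[
\sum_{k\ge 0}\frac{w(1/n_k)}{n_{k+1}}\;\le\;\int_0^{1/n_0}w(t)\,dt .
\]
The point is that for \emph{any} sequence with $n_{k+1}\ge 2n_k$, condition (iii) holds automatically as soon as $\int_0^1 w=+\infty$; and conversely (iii) forces $\int_0^1 w$ to be infinite.

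With this in hand, ($\Leftarrow$) will be immediate: I would invoke the hypothesis with $A=2$ to produce a sequence satisfying (i)--(iii); then $n_{k+1}\ge 2n_k$, and the last display together with (iii) yields \eqref{EQ:LogDiniDiv}. For ($\Rightarrow$) I would assume \eqref{EQ:LogDiniDiv} and fix $A>1$; since a sequence valid for $A=2$ is a fortiori valid for every smaller $A$, I may assume $A\ge 2$. Then I build $(n_k)$ greedily: pick $n_0$ with $w(1/n_0)>0$, and given $n_0<\dots<n_k$, put $S_k:=\sum_{j=0}^k w(1/n_j)$, let $m_k:=\min\{\,n\in\mathbb{N}:w(1/n)\ge S_k\,\}$ (finite since $w(1/n)\to\infty$), and set
\[
n_{k+1}:=\max\bigl(\lceil A n_k\rceil,\ m_k\bigr).
\]
Then $n_{k+1}\ge A n_k>n_k$, which is (i); and $w(1/n_{k+1})\ge w(1/m_k)\ge S_k=\sum_{j=0}^k w(1/n_j)$ by monotonicity, which is condition (ii) (in the form of \eqref{EQ:ProdDouble}). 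Finally $n_{k+1}\ge 2n_k$, so (iii) follows from the first paragraph.

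The only real content will be realizing (i) and (ii) simultaneously, which the choice $n_{k+1}=\max(\lceil An_k\rceil, m_k)$ settles at a stroke; condition (iii) then comes for free, and the integral-splitting bound above — lower-bounding $\int_{I_k}w$ by the portion of $I_k$ nearest its left endpoint, where $w$ is at least $w(2/n_{k+1})\ge w(1/n_k)$ — is the one small idea that makes this work and also powers the converse. I expect no serious obstacle: the remaining checks are routine ($m_k$ is finite; the maximum with $\lceil A n_k\rceil$ keeps $(n_k)$ strictly increasing; \eqref{EQ:LogDiniDiv} may be localized near the origin), and I do not anticipate needing the good-weight hypothesis \eqref{EQ:LogwDouble} anywhere in the argument.
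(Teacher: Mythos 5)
Your converse direction is fine, and your greedy choice $n_{k+1}=\max(\lceil An_k\rceil,m_k)$ is a reasonable (indeed tidy) way to secure (i) and (ii) simultaneously; reading (ii) in the shifted form \eqref{EQ:ProdDouble} is also the sensible interpretation, since the paper's own construction only achieves the stated inequality up to a factor $2$. The genuine gap is in your proof of (iii). Your first-paragraph estimate shows, writing $w(t)=\log\frac{1}{W(t)}$, that $\sum_k w(1/n_k)/n_{k+1}\le\int_0^{1/n_0}w(t)\,dt$; this bounds the sum \emph{above} by the integral, so it yields ``(iii) $\Rightarrow$ \eqref{EQ:LogDiniDiv}'' but cannot yield the implication you invoke in the forward direction, namely that (iii) ``holds automatically'' for any sequence with $n_{k+1}\ge 2n_k$ once the integral diverges. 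That claim is false: take the good weight $W(t)=e^{-1/t}$, for which \eqref{EQ:LogDiniDiv} holds, and $n_0=1$, $n_{k+1}=2^{n_k}$; then $n_{k+1}\ge 2n_k$ but $\sum_k w(1/n_k)/n_{k+1}=\sum_k n_k2^{-n_k}<\infty$. So as written, the key property (iii) of your sequence is unproven.

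What is actually needed is the reverse estimate: an upper bound $\int_{1/n_{k+1}}^{1/n_k}w(t)\,dt\lesssim w(1/n_k)/n_k$ on each gap, followed by an index shift to replace $n_k$ by $n_{k+1}$ in the denominator. Both steps use precisely the two ingredients you claim to avoid: the minimality in the choice of $n_{k+1}$ (so that $w$ cannot be much larger than $w(1/n_k)$ on the gap) and the doubling hypothesis \eqref{EQ:LogwDouble} (to control $w$ near the left endpoint $1/n_{k+1}$ and to compare $w(1/n_{k+1})$ with $w(1/n_k)$). This is exactly how the paper proceeds: it takes $m_{k+1}$ minimal with $w(1/m_{k+1})\ge 2w(1/m_k)$, uses \eqref{EQ:LogwDouble} to get $\int_{1/m_{k+1}}^{1/m_k}w\asymp\bigl(\tfrac1{m_k}-\tfrac1{m_{k+1}}\bigr)w(1/m_k)$ and $w(1/m_{k+1})\asymp w(1/m_k)$, deduces the discrete divergence, and only then thins the sequence to enforce the gap condition (i) while keeping (iii). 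Your construction can be salvaged along the same lines (in case $n_{k+1}=m_k$ use minimality of $m_k$ plus \eqref{EQ:LogwDouble}; in case $n_{k+1}=\lceil An_k\rceil$ apply \eqref{EQ:LogwDouble} about $\log_2 A$ times), but the missing argument is the heart of the forward implication, and the assertion that the good-weight hypothesis plays no role should be withdrawn.
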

\begin{proof}
    Suppose that $W$ satisfies the condition \eqref{EQ:LogDiniDiv}. Pick an arbitrary integer $m_0>0$, and inductively choose $m_{k+1}>m_k$ to be the smallest integer for which 
    \[
    \log \frac{1}{W(1/m_{k+1})} \geq 2\log \frac{1}{W(1/m_k)}, \qquad k=0,1,2,\dots.
    \]
    With the sequence $(m_k)_k$ at hand, we note that for each $k\geq 1$, we have
    \begin{multline*}
    \int_{1/m_{k+1}}^{1/m_k} \log \frac{1}{W(t)} dt \asymp \left( \frac{1}{m_k} - \frac{1}{(m_{k+1}-1)} \right) \log \frac{1}{W(1/m_k)} + \int_{1/m_{k+1}}^{1/(m_{k+1}-1)} \log \frac{1}{W(t)} dt  \\ 
    \asymp \left( \frac{1}{m_k} - \frac{1}{m_{k+1}}\right)\log \frac{1}{W(1/m_k)},
    \end{multline*}
where the assumption \eqref{EQ:LogwDouble} was utilized in the last step. Hence the condition \eqref{EQ:LogDiniDiv} translates into
    \[
     \int_0^{1/m_1} \log \frac{1}{W(t)} dt \asymp \sum_{k=1}^\infty \left( \frac{1}{m_k} - \frac{1}{m_{k+1}}\right) \log \frac{1}{W(1/m_k)} = + \infty.
     \]
Using the assumption \eqref{EQ:LogwDouble} and the definition of $(m_k)$, we conclude that
\begin{equation}\label{EQ:logDiscrete}
\sum_k \frac{1}{m_{k+1}} \log \frac{1}{W(1/m_k)} = +\infty.
\end{equation}
Condition \eqref{EQ:LogDiniDiv} ensures that the sequence $(m_k)$ satisfies the properties $(ii)-(iii)$, hence we only need to modify it to meet $(i)$. To this end, fix $A>1$ and observe that for each $k\geq 1$,
    \begin{multline*}
    \sum_{m_k/ A \leq m_{j} \leq m_k} \frac{1}{m_{j}} \log \frac{1}{W(1/m_j)} \leq \frac{A}{m_k} \sum_{m_k/ A \leq m_j \leq m_k} \log \frac{1}{W(1/m_j)} \leq  \\ 
    A\sum_{j\geq 0} 2^{-j} \frac{1}{m_k} \log \frac{1}{W(1/m_k)}  \lesssim \frac{1}{m_k} \log \frac{1}{W(1/m_k)}.
    \end{multline*}
    Again, a similar argument as when \eqref{EQ:logDiscrete} was deduced, shows that we may drop all the integers $m_k$ which violate $(i)$, while still maintaining the condition $(iii)$. The remaining part of $(m_k)_k$ may then be re-labeled as $(n_k)_k$. The converse easily follows from \eqref{EQ:logDiscrete}, where the inequality can now be reversed by the assumptions on $(n_k)$.
\end{proof}

The following lemma will play a crucial role in or developments, and is essentially a linear programming problem that can be solved explicitly. 

\begin{lemma} \thlabel{LEM:LINPROG}
Let $W$ be a good weight which satisfies the condition \eqref{EQ:LogDiniDiv}, and let $(n_k)_{k=0}^\infty$ be positive integers which fulfill the hypothesis of \thref{LEM:DISDINI}. Then for any $0<\varepsilon_0<1$ and any integer $N>1$, there exists a constant $c_0>0$, independent of $\varepsilon_0$ and $N$, and positive numbers $\varepsilon_1, \varepsilon_2, \dots, \varepsilon_N$, such that the following statements hold:
\begin{enumerate}
    \item[(i)]
    $\sum_{j=0}^N \varepsilon_j =1$,
    \item[(ii)]$\sum_{0\leq j \leq k} n_j \varepsilon_j \leq  c_0\log \frac{1}{W(1/n_k)}, \qquad k=0,1,2,\dots,N$,
    \item[(iii)] $\sum_{k<j\leq N} \varepsilon_j \leq \frac{c_0}{n_{k+1}} \log \frac{1}{W(1/n_k)}, \qquad k=0,1,2,\dots,N$.
\end{enumerate}
    
\end{lemma}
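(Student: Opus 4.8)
\textbf{Proof proposal for \thref{LEM:LINPROG}.}

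The plan is to construct the numbers $\varepsilon_j$ by an explicit ``greedy'' recipe driven by conditions (ii) and (iii), using the logarithmic divergence \eqref{EQ:LogDiniDiv} only at the very end to ensure normalization is achievable. Set $w_k := \log \frac{1}{W(1/n_k)}$ for brevity, so $(w_k)$ is increasing, $\sum_{j\le k} w_j \le w_k$ by (ii) of \thref{LEM:DISDINI}, and $\sum_k \frac{w_k}{n_{k+1}} = +\infty$ by (iii). The idea is to put as much mass as possible on the \emph{small} indices (to keep $\sum \varepsilon_j = 1$ reachable) while respecting the two one-sided budget constraints. Concretely, I would try $\varepsilon_j = c\, \frac{w_j}{n_j}$ for $1 \le j \le N$ for a normalizing constant $c$, and $\varepsilon_0$ chosen to absorb the remainder; one checks that with this choice $\sum_{0\le j\le k} n_j\varepsilon_j = c\sum_{0\le j\le k} w_j \le c\, w_k$ by the summation property in \thref{LEM:DISDINI}(ii), giving (ii) with $c_0 = c$, and $\sum_{k<j\le N} \varepsilon_j = c\sum_{k<j\le N} \frac{w_j}{n_j} \le \frac{c}{n_{k+1}}\sum_{k<j\le N} w_j$; the last sum is $\lesssim w_N$, which is not quite $w_k$, so this naive ansatz needs refinement.

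The fix is to localize: instead of a single global scaling, allocate the unit mass across ``blocks'' so that the tail sum past index $k$ is controlled by $w_{k+1}/n_{k+1}$ rather than by $w_N/n_{k+1}$. I would set $\varepsilon_j \asymp \big(\frac{w_j}{n_{j+1}} - \frac{w_{j-1}}{n_j}\big)_+$ or, more robustly, define the partial sums $S_k := \sum_{j>k}\varepsilon_j$ directly by $S_k := \min\!\big(1,\ \frac{A_0\, w_{k+1}}{n_{k+1}}\big)$ for a fixed absolute constant $A_0$, and recover $\varepsilon_j = S_{j-1} - S_j \ge 0$ once one verifies monotonicity of $k \mapsto \frac{w_{k+1}}{n_{k+1}}$ for large $k$ (which follows since $n_{k+1} \ge A n_k$ grows geometrically while $w_{k+1} \le C\, w_k$ by the good-weight doubling \eqref{EQ:LogwDouble}, so the ratio decays). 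With this definition (iii) holds by construction with $c_0 = A_0$. For (ii), $\sum_{0\le j\le k} n_j\varepsilon_j \le n_k \sum_{0\le j \le k}\varepsilon_j \le n_k \cdot S_{-1} \le n_k$, which is too crude; the sharper route is Abel summation: $\sum_{j\le k} n_j \varepsilon_j = \sum_{j\le k} n_j (S_{j-1}-S_j)$, and since $S_{j-1} \le \frac{A_0 w_j}{n_j}$ the geometric growth of $n_j$ makes $\sum_{j\le k} n_j S_{j-1} \lesssim n_k S_{k-1} \lesssim w_k$, giving (ii). Finally, normalization (i): with $S_{-1} = \min(1, A_0 w_0/n_0)$ one may not have $\sum\varepsilon_j = 1$ exactly, so I would either rescale all $\varepsilon_j$ by $1/S_{-1}$ when $S_{-1} < 1$ (legitimate because rescaling up by a bounded factor only enlarges $c_0$ by that factor, provided $S_{-1}$ is bounded below — and here is where divergence enters: \eqref{EQ:LogDiniDiv}, i.e. (iii) of \thref{LEM:DISDINI}, guarantees the full tail $\sum_j \frac{w_j}{n_{j+1}} = \infty$, so by choosing $N$ large and absorbing any deficit into $\varepsilon_0$ we can always hit $\sum_{j=0}^N \varepsilon_j = 1$ without inflating the constant), or, cleanest, first build nonnegative $\tilde\varepsilon_j$ satisfying (ii)--(iii) with total mass $\ge 1$ (possible for $N$ large by divergence) and then truncate from the bottom index upward to bring the total down to exactly $1$, which only decreases the left sides of (ii) and (iii).

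The main obstacle I anticipate is reconciling the two budget constraints with exact normalization using a single absolute constant $c_0$ that is genuinely independent of both $\varepsilon_0$ and $N$. The tension is real: (ii) rewards mass on small $j$, (iii) rewards mass on large $j$, and (i) fixes the total; the summation hypothesis \thref{LEM:DISDINI}(ii) is precisely the structural input that makes a ``front-loaded'' allocation compatible with (ii), while the geometric lacunarity (i) of \thref{LEM:DISDINI} is what lets Abel summation collapse tails to their final term. I would spend the most care verifying that the monotonicity needed to make $\varepsilon_j = S_{j-1}-S_j$ nonnegative does hold from some index on, and handle the finitely many initial indices by hand (lumping them into $\varepsilon_0$), since that is where the good-weight condition \eqref{EQ:LogwDouble} and the lacunarity interact most delicately.
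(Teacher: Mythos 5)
Your refined construction is, at its core, the same telescoping scheme the paper uses: the paper sets $\omega_k := \tfrac{1}{n_{k+1}}\log\tfrac{1}{W(1/n_k)}$, takes $\varepsilon_k$ proportional to $\omega_k-\omega_{k+1}$ (and $\varepsilon_N$ proportional to $\omega_N$), fixes the proportionality constant by $\varepsilon_0+\sum_{j\geq 1}\varepsilon_j=1$, and then (iii) telescopes exactly while (ii) follows from $n_j\varepsilon_j\lesssim \tfrac{n_j}{n_{j+1}}\log\tfrac{1}{W(1/n_j)}\leq \log\tfrac{1}{W(1/n_j)}$ together with the summation property of \thref{LEM:DISDINI}. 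However, your version of this step has a gap that matters. Writing $w_k:=\log\tfrac{1}{W(1/n_k)}$, you define the tails by $S_k:=\min\bigl(1,\tfrac{A_0 w_{k+1}}{n_{k+1}}\bigr)$ and claim (iii) holds ``by construction''; but (iii) demands the tail be at most $\tfrac{c_0}{n_{k+1}}w_k$, not $\tfrac{c_0}{n_{k+1}}w_{k+1}$, and these two quantities are \emph{not} comparable with a uniform constant here: the doubling condition \eqref{EQ:LogwDouble} only compares scales differing by a bounded factor, whereas $n_{k+1}\geq A\,n_k$ with $A$ arbitrary (and $A=A_N\to\infty$ in the application of the lemma). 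Indeed the construction behind \thref{LEM:DISDINI} forces $w_{k+1}\geq 2w_k$ at the very least, and after thinning to enforce the gap $A$ the ratio $w_{k+1}/w_k$ is unbounded. Replacing $w_{k+1}/n_{k+1}$ by $w_k/n_{k+1}$, i.e.\ by the exact quantity appearing in (iii), repairs this and lands you precisely on the paper's choice.

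The same invalid comparison undermines your nonnegativity argument, which you yourself flagged as the delicate point: ``$w_{k+1}\leq C w_k$ by the good-weight doubling'' is false in this lacunary setting for the reason just given, so the claimed decay of $k\mapsto w_{k+1}/n_{k+1}$ does not follow from \eqref{EQ:LogwDouble}; positivity of the telescoped differences (equivalently, monotonicity of $k\mapsto \tfrac{1}{n_{k+1}}\log\tfrac{1}{W(1/n_k)}$, or a modification such as taking positive parts / a monotone envelope of the tail budgets) genuinely needs its own argument and cannot be obtained from doubling alone. Finally, the normalization: $\varepsilon_0$ is \emph{prescribed} in the statement, so you may not ``absorb any deficit into $\varepsilon_0$''; the clean route (and the paper's) is to scale $\varepsilon_1,\dots,\varepsilon_N$ so that their total is exactly $1-\varepsilon_0$, which is also what makes $c_0$ independent of $\varepsilon_0$ and $N$ — the divergence \eqref{EQ:LogDiniDiv} is not what rescues this step; its role is rather that (i) and (iii) can coexist at all, as remarked after the lemma. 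So the skeleton is right, but the two central verifications ((iii) ``by construction'' and $\varepsilon_j>0$) rest on comparing $\log\tfrac{1}{W(1/n_k)}$ across unboundedly separated scales, and as written the proof does not go through without the corrections above.
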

Note that $(i)$ in conjunction with $(iii)$ is only possible if \eqref{EQ:LogDiniDiv} holds, which is visible from \thref{LEM:DISDINI}.
\begin{proof}
Note that we may assume that $\lim_{t\downarrow 0+} t \log \frac{1}{W(t)}=0$, otherwise the task becomes simple. Fix $0<\varepsilon_0<1$, and let $(n_k)_{k=0}^\infty$ be a sequence satisfying the hypothesis of \thref{LEM:DISDINI}. For brevity, we may set $w_k := \frac{1}{n_{k+1}} \log \frac{1}{W(1/n_k)}$ and take 
\[
\varepsilon_k := c_0(w_k - w_{k+1}), \qquad k=1,2,\dots,N-1, \qquad \varepsilon_N = c_0 w_{N}
\]
where $c_0>0$ such that $\varepsilon_0 + c_0 w_1 = 1$. This implies that $(i)$ holds. The assumption of the $(n_k)_{k=1}^\infty$, readily implies $(ii)$: 
\[
\sum_{j=0}^k  n_j \varepsilon_j \leq c_0 \sum_{j=0}^k \log \frac{1}{W(1/n_j)} \leq c_0 \log \frac{1}{W(1/n_k)}, \qquad k=0,1,2,\dots, N.
\]
The verification of $(iii)$ is also simple:
\[
\sum_{k<j\leq N} \varepsilon_j = \frac{c_0}{n_{k+1}} \log \frac{1}{W(1/n_k)}, \qquad k=0,1,2,\dots,N.
\]
By means of increasing $c_0>0$ slightly, we can also ensure it to be independent of $\varepsilon_0>0$. \qedhere

\end{proof}

\subsection{Decomposing singular measures and Poisson estimates}

Let $\Theta:= \Theta_{\mu}$ be any singular inner function with associated singular measure $\mu$, which we for simplicity shall assume satisfies $\mu(\T)\leq 1$. Fix an arbitrary $0<\varepsilon_0<1$, a positive integer $N>0$, and let $(n_k)_{k=1}^\infty$ be positive integers satisfying the hypothesis of \thref{LEM:DISDINI}. According to \thref{LEM:LINPROG}, there exists a constant $c_0>0$, independent of $\varepsilon_0, N>0$, and positive numbers $(\varepsilon_k)_{k=1}^N$ satisfying the hypothesis $(i)-(iii)$. With these parameters at hand, we set 
\begin{equation}\label{EQ:Phi_k}
\phi_k(\zeta) = \sum_{I \in \Dy_{n_k}} \varepsilon_k \frac{\mu(I)}{|I|} 1_{I}(\zeta), \qquad \zeta \in \T,
\end{equation}
where $\Dy_{n_k}$ denotes a partition of $\T$ consisting of $n_k$ arcs of length $1/n_k$. Similarly to before, we also set 
\[
d\nu_k = \phi_k dm - \varepsilon_k d\mu , \qquad k=0,1,2,\dots
\]
and note that $\nu_k(I)=0$ for any arc $I \in \Dy_{n_k}$. This implies that for an arbitrary arc $J \subset \T$, we have that there are at most two arcs $I_1, I_2 \in \Dy_{n_k}$ such that
\[
\nu_k(J) =  \nu_k(I_1) + \nu_k(I_2) \leq \varepsilon_k n_k \mu(I_1) |J\cap I_1| +  \varepsilon_k n_k \mu(I_2) |J\cap I_2| \leq  2 \varepsilon_k.
\]
For the lower bound, the same argument gives  
\[
\nu_k(J) \geq - \varepsilon_k \mu(I_1) - \varepsilon_k \mu(I_2) \geq - 2\varepsilon_k.
\]
Hence we conclude that $\abs{\nu_k(J)} \leq 2 \varepsilon_k$ for any arc $J \subset \T$. As before, we shall transform this into the following growth estimate on the Poisson extensions of $\nu_k$.
\begin{lemma}\thlabel{LEM:Poissonest} There exists an absolute constant $c>0$, such that the following holds:
\[
P(\nu_k)(z) \leq c\varepsilon_k\min \left( n_k , \frac{1}{1-|z|}\right), \qquad z \in \D.
\]
\end{lemma}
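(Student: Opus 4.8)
The plan is to repeat, almost verbatim, the argument used to establish \thref{LEM:Poisest}, since the combinatorial input is the same: we have already recorded that $\abs{\nu_k(J)} \leq 2\varepsilon_k$ for every arc $J \subset \T$, and moreover $\nu_k(\T) = 0$ because both $\phi_k\,dm$ and $\varepsilon_k \mu$ carry the same total mass $\varepsilon_k\mu(\T)$. These two facts are exactly what powered the proof of \thref{LEM:Poisest}, with $\varepsilon_k$ now playing the role of $\eta\kappa_W(2^{-n_k})$ and $n_k$ the role of $2^{n_k}$.

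First I would establish the bound $P(\nu_k)(z) \lesssim \varepsilon_k/(1-|z|)$. Since $\nu_k(\T)=0$, integration by parts gives
\[
P(\nu_k)(z) = (1-|z|^2)\int_0^{2\pi} \nu_k\big(I(e^{it})\big)\,\frac{d}{dt}\abs{e^{it}-z}^{-2}\,dt, \qquad z\in\D,
\]
where $I(e^{it})$ denotes the shortest arc joining $1$ to $e^{it}$. Using the elementary estimate $\abs{\tfrac{d}{dt}\abs{e^{it}-z}^{-2}} \leq C\abs{e^{it}-z}^{-3}$ together with $\abs{\nu_k(I(e^{it}))} \leq 2\varepsilon_k$, and then the standard Poisson-type bound $(1-|z|^2)\int_0^{2\pi}\abs{e^{it}-z}^{-3}\,dt \lesssim (1-|z|)^{-1}$ (Theorem 1.7 in \cite{hedenmalmbergmanspaces}), one arrives at $P(\nu_k)(z) \lesssim \varepsilon_k/(1-|z|)$.

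For the complementary bound $P(\nu_k)(z) \lesssim \varepsilon_k n_k$, I would simply discard the negative part: since $\mu \geq 0$,
\[
P(\nu_k)(z) \leq P(\phi_k)(z) = \varepsilon_k n_k \sum_{I\in\Dy_{n_k}} \mu(I)\,P(1_I)(z) \leq \varepsilon_k n_k \sum_{I\in\Dy_{n_k}} \mu(I) = \varepsilon_k n_k\,\mu(\T) \leq \varepsilon_k n_k,
\]
where we used $|I| = 1/n_k$ so that $\phi_k = \varepsilon_k n_k\,\mu(I)$ on each $I$, the trivial bound $0 \leq P(1_I) \leq 1$, and the normalization $\mu(\T)\leq 1$. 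Taking the minimum of the two estimates yields the claim with $c$ an absolute constant. There is no genuine obstacle here; the only points requiring attention are the normalization of $dm$ (which makes $\phi_k$ constant equal to $\varepsilon_k n_k\mu(I)$ on $I$) and invoking the correct Poisson kernel estimate, both of which were already handled in the proof of \thref{LEM:Poisest}.
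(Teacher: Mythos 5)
Your proposal is correct and follows exactly the route the paper intends: the paper omits the proof precisely because it is "principally similar" to \thref{LEM:Poisest}, and your argument reproduces that proof with $\varepsilon_k$ in place of $\eta\kappa_W(2^{-n_k})$ and $n_k$ in place of $2^{n_k}$, using $\nu_k(\T)=0$ with integration by parts for the $\varepsilon_k/(1-|z|)$ bound and the trivial bound $P(\nu_k)\leq P(\phi_k)\leq \varepsilon_k n_k$ (via $\mu(\T)\leq 1$) for the other. No gaps to report.
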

The proof is principally similar to \thref{LEM:Poisest}, we omit the details.

\subsection{The main construction}
We now turn our attention to the main case.

\begin{proof}[Proof of \thref{THM:CIDD}]
According to the discussion in \thref{LEM:LINPROG}, we may assume that $W$ satisfies the condition $\lim_{t\to 0+} t\log W(t)=0$. 

\proofpart{1}{Uniform norm bound:}
Let $\varepsilon_0, N>0$ and $A>1$ be given. According to \thref{LEM:DISDINI}, we can pick positive integers $(n_k)_k$ satisfying the hypothesis therein. With this choice of $(n_k)_k$, we may apply \thref{LEM:LINPROG}, in order to obtain a constant $c_0>0$, independent on $\varepsilon_0,N>0$ and positive numbers $(\varepsilon_k)_{k=1}^N$, which satisfy the required properties $(i)-(iii)$ in the statement. We now form the corresponding bounded holomorphic functions defined by
\begin{equation}\label{EQ:DEFFN}
F_N(z) := \exp \left( \sum_{1\leq j \leq N} H(\phi_j)(z) \right), \qquad z\in \D,
\end{equation}
where $H$ denotes the Herglotz transform. Fix a number $\rho >1$ to be determined later and consider annuli's of the form $R_k := \{ 1/n_{k+1}<1-|z|\leq 1/n_k \} $. Invoking \thref{LEM:Poissonest} and $(ii)-(iii)$ of \thref{LEM:LINPROG}, we obtain the following estimate in $z\in R_k$:
\begin{multline*}
\abs{\Theta(z) F_N(z)} W(1-|z|)^{\rho} \leq  \exp \left( \sum_{1\leq j \leq N} P(\nu_j)(z) \right) W(1-|z|)^{\rho} \\
 \leq \exp \left( c_0\sum_{1\leq j \leq k} \varepsilon_j n_j \right) \cdot \exp \left( c_0 n_{k+1} \sum_{k< j\leq N} \varepsilon_j \right) W(1/n_k)^{\rho}
\lesssim W(1/n_k)^{(\rho-2c_0)}
\end{multline*}
Hence this quantity remains bounded if $\rho>2c_0$. Meanwhile, the estimate on the disc $|z| \leq 1-1/n_0$ is again carried out using \thref{LEM:Poissonest}:
\[
\sup_{|z|\leq 1-1/n_0} \abs{\Theta(z) F_N(z)} W(1-|z|)^{\rho} \leq \sup_{|z|\leq 1-1/n_0} \exp \left( \frac{c_0}{1-|z|} \sum_{k=1}^N \varepsilon_k  \right) \leq \exp ( c_0 n_0).
\]
As a consequence, there exists a constant $C>0$, independent of $N>0$, the initial value $\varepsilon_0>0$ of the sequence $(\varepsilon_j)_{j=0}^N$, and the minimal gap $A>1$ of the sequence $(n_j)_{j=0}^\infty$, such that
\[
\sup_{z \in \D} W(1-|z|)^{\rho} \abs{F_N(z) \Theta(z)} \leq C,
\]
whenever $\rho> 2c_0$.

\proofpart{2}{Passing to a convergent sequence:}
We now reintroduce the appropriate parameters, so that they all depend on the single parameter $N>0$. To this end, let $\varepsilon_0(N) \to 0$ and $A_N \to \infty$ as $N\to\infty$, and apply \thref{LEM:DISDINI} with parameter $A_N$ and \thref{LEM:LINPROG} with $\varepsilon_0(N)$, which give rise to positive integers $(n_j(N))_{j=0}^\infty$ and positive numbers $(\varepsilon_j(N))_{j=0}^N$. We now highlight the following crucial properties, needed for our purposes: 
\begin{enumerate}
    \item[(a.)] $n_{j+1}(N)\geq A_N n_j(N)$, $j=0,1,2,\dots$,
    \item[(b.)] $\sum_{j=0}^N \varepsilon_j(N)=1$,
    \item[(c.)] $\varepsilon_j(N) \leq c_0 \frac{1}{n_{j+1}(N)} \log \frac{1}{W(1/n_j(N))} \to 0$, as $N\to \infty$, for each fixed $j$,
\end{enumerate}
since the gaps $A_N \to \infty$, and $\kappa_W(t)\to 0$ as $t\to 0$. We shall now record the following lemma on weak-star convergence.

\begin{lemma}\thlabel{LEM:WEAKSTAR} Let $(F_N)_N$ be the functions defined as in \eqref{EQ:DEFFN}, where the corresponding parameters $(n_j(N))_{j=0}^\infty$ and $(\varepsilon_j(N))_{j=0}^N$, are defined as in the previous paragraph. Then the product $F_N \Theta$ converges to $1$ uniformly on compact subsets in $\D$.

\end{lemma}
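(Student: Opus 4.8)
The plan is to track $\log(F_N\Theta)$ directly and show that it tends to $0$ uniformly on compact subsets of $\D$. Write $G(z):=\int_{\T}\frac{\zeta+z}{\zeta-z}\,d\mu(\zeta)$ for the Herglotz integral of $\mu$, so that $\Theta=\Theta_\mu=e^{-G}$; by the definition \eqref{EQ:DEFFN} of $F_N$ (recall that $\Re H(\phi)=P(\phi)$), the function $F_N\Theta$ is zero-free and
\[
\log\bigl(F_N(z)\Theta(z)\bigr)=\sum_{1\le j\le N}H(\phi_j)(z)-G(z)=\int_{\T}\frac{\zeta+z}{\zeta-z}\,d\Bigl(\,\sum_{1\le j\le N}\phi_j\,dm-\mu\Bigr)(\zeta).
\]
Using property $(b.)$ of the preceding paragraph, namely $\sum_{1\le j\le N}\varepsilon_j(N)=1-\varepsilon_0(N)$, together with $d\nu_j=\phi_j\,dm-\varepsilon_j\,d\mu$, the signed measure inside equals $\sum_{1\le j\le N}\nu_j-\varepsilon_0(N)\mu$, so
\[
\log\bigl(F_N(z)\Theta(z)\bigr)=\sum_{1\le j\le N}\int_{\T}\frac{\zeta+z}{\zeta-z}\,d\nu_j(\zeta)-\varepsilon_0(N)\,G(z).
\]
It then suffices to prove that, for each fixed $r\in(0,1)$, both terms on the right tend to $0$ uniformly on $\{|z|\le r\}$; continuity of the exponential on bounded sets gives $F_N\Theta\to1$ uniformly there, and $r<1$ being arbitrary yields the lemma.

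The term $\varepsilon_0(N)G(z)$ is disposed of at once, since $G$ is holomorphic in $\D$, hence bounded on $\{|z|\le r\}$, while $\varepsilon_0(N)\to0$ by construction. For the remaining sum, the crux is a cancellation estimate exploiting that $\nu_j$ carries no mass on the arcs of $\Dy_{n_j}$: since $\int_I d\nu_j=0$ and the total variation of $\nu_j$ restricted to $I\in\Dy_{n_j}$ is at most $2\varepsilon_j\mu(I)$, one subtracts on each $I$ the value of the kernel $\zeta\mapsto\frac{\zeta+z}{\zeta-z}$ at the midpoint of $I$; its derivative along $\T$ is bounded by a constant $C_r$ depending only on $r$, and $|I|=1/n_j$, so
\[
\Bigl|\int_{\T}\frac{\zeta+z}{\zeta-z}\,d\nu_j(\zeta)\Bigr|\le\sum_{I\in\Dy_{n_j}}\frac{C_r}{n_j}\cdot2\varepsilon_j\mu(I)=\frac{2C_r}{n_j}\varepsilon_j\,\mu(\T)\le\frac{2C_r}{n_j}\varepsilon_j.
\]
Summing over $1\le j\le N$, using $n_j=n_j(N)\ge n_1(N)$ for $j\ge1$ together with $\sum_{j=1}^N\varepsilon_j(N)\le1$, gives a bound $2C_r/n_1(N)$, and property $(a.)$ forces $n_1(N)\ge A_N n_0(N)\ge A_N\to\infty$. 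This estimate is uniform on $\{|z|\le r\}$, which completes the argument.

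I do not expect a genuine obstacle here: all the analytic weight of the theorem (the linear-programming choice of the $\varepsilon_j$ in \thref{LEM:LINPROG}, and the role of the logarithmic divergence \eqref{EQ:LogDiniDiv}) has already been spent in the uniform norm bound of Step 1, and the present statement is a soft weak-$\ast$ fact — indeed it requires far less than \thref{LEM:LINPROG}, only $\sum_j\varepsilon_j\le1$ and the gap property $(a.)$. The one point deserving care is the bookkeeping with sign conventions ($\Theta_\mu=e^{-G}$ and $\Re H=P$); alternatively, one could package the cancellation step as the assertion that $\sum_{1\le j\le N}\phi_j\,dm\to\mu$ weak-$\ast$ in $M(\T)$, exactly parallel to \thref{LEM:CONVFeta}.
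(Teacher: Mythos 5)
Your proof is correct, and it takes a more direct route than the paper. The paper argues at the level of measures: it shows that $\nu^N=\sum_{k\le N}\phi_k\,dm-d\mu$ tends to zero weak-star in $M(\T)$ by testing against a continuous $\psi$, splitting the scales at a threshold $M$ given by uniform continuity, and using property (c.) ($\varepsilon_k(N)\to 0$ for each fixed $k$) to kill the coarse scales; the passage from this weak-star statement to uniform convergence of $F_N\Theta$ on compacts is then left implicit (it is standard, since the total variations are uniformly bounded and the Herglotz kernels are equicontinuous in $\zeta$ for $z$ in a compact set). You instead estimate $\log(F_N\Theta)=\sum_j H(\nu_j)-\varepsilon_0(N)G$ directly: the zero-mean property of $\nu_j$ on each arc of $\Dy_{n_j}$ plus the Lipschitz bound $C_r$ for the kernel on $\{|z|\le r\}$ gives $|H(\nu_j)(z)|\lesssim C_r\varepsilon_j/n_j$, and summing yields the explicit rate $O(C_r/n_1(N))+\varepsilon_0(N)\,O_r(1)\to 0$, using only $\sum_j\varepsilon_j\le 1$ and the gap property (a.) rather than (c.). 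This is quantitative where the paper's argument is soft, and it also handles a small point more carefully than the paper does: since $\sum_{j=1}^N\varepsilon_j(N)=1-\varepsilon_0(N)$, the identity $\sum_{k\le N}\phi_k\,dm-d\mu=\sum_{k\le N}\nu_k$ used in the paper is off by the harmless term $\varepsilon_0(N)\,d\mu$, which you track explicitly via the bounded function $G$ on compacts. Both arguments are sound; yours trades the abstract weak-star step for an elementary kernel-cancellation estimate and makes the dependence on the parameters $A_N$ and $\varepsilon_0(N)$ visible.
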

\begin{proof}
Set 
\[
\phi_k dm -\varepsilon_k(N) d\mu = \varepsilon_k(N) \left( \sum_{I \in \Dy_{n_k}}  \frac{\mu(I)}{|I|} 1_{I}dm - d\mu \right)=: \varepsilon_k(N) d\sigma_k(N) , \qquad \zeta \in \T,
\]
and note that it clearly suffices to show that
\[
d\nu^N := \sum_{k=1}^N \phi_k dm - d\mu = \sum_{k=1}^N (\phi_k dm - \varepsilon_k(N)d\mu) =\sum_{k=1}^N \varepsilon_k(N) d\sigma_k(N),
\]
converges to zero in the weak-star topology of measures $M(\T)$, as $N\to \infty$. Fix an arbitrary $\eta>0$ and let $\psi$ be a continuous function on $\T$. By uniform continuity, we can find a large integer $M>1$, such that 
\begin{equation}\label{EQ:uniest}
\sup_{|\zeta-\xi|\leq 1/n_k(N) } \abs{\psi(\zeta)-\psi(\xi)} \leq \eta, \qquad n_k(N) > M.
\end{equation}
We now group the terms as follows:
\[
 \abs{\int_{\T} \psi d\sigma_k(N) } \leq \sum_{I \in \Dy_{n_k}} \abs{ \frac{\mu(I)}{|I|}\int_I \psi dm - \int_I \psi d\mu}.
\]
If $\xi_I$ denotes the center of each arc $I \in \Dy_{n_k}$, then
\[
\abs{ \frac{\mu(I)}{|I|}\int_I \psi dm - \int_I \psi d\mu} \leq \frac{\mu(I) }{|I|} \int_I \abs{\psi- \psi(\xi_I)} dm + \int_I \abs{\psi-\psi(\xi_I)} d\mu \leq 2 \eta \mu(I).
\]
Summing over all $I\in \Dy_{n_k}$, we get 
\[
\abs{\int_{\T} \psi d\sigma_k(N) } \leq \sum_{I \in \Dy_{n_k}} 2 \eta \mu(I) \leq 2 \eta \mu(\T), \qquad n_k(N) >M.
\]
On the other hand, we also have the trivial estimate 
\[
\abs{\int_{\T} \psi d\sigma_k(N) } \leq \norm{\psi}_\infty \norm{\sigma_k(N)} \leq 2 \norm{\psi}_\infty \mu(\T).
\]
Combining the above estimates, we obtain
\[
\abs{\int_{\T} \psi d\nu^N } \leq \sum_{k=1}^{N} \varepsilon_k(N) \abs{\int_{\T} \psi d\sigma_k(N) } \leq \sum_{k=1}^{M} \varepsilon_k(N) 2\norm{\psi}_{\infty}\mu(\T) + 2 \eta \mu(\T).
\]
Letting $N\to \infty$ finishes the proof.
\end{proof}

With this lemma at hand, we now invoke the compact embedding of $A^\infty(W^{\rho})$ into the space $A^\infty(W^M)$ for $M>\rho$, allowing us to pass a subsequence $F_{N_k}$ such that $F_{N_k}\Theta \to 1$ in $A^\infty(W^M)$. Since the $F_N$'s are zero-free in $\D$, we get that 
\begin{enumerate}
    \item[(i)] $\sup_k \norm{\Theta^{1/2M} F^{1/2M}_{N_k}}_{A^\infty(W^{1/2})} < \infty$,
    \item[(ii)] $\Theta^{1/2M}(z) F^{1/2M}_{N_k}(z) \to 1 $ uniformly on compact subsets of $\D$.
\end{enumerate}
Using the compact embedding of $A^\infty(W^{1/2}) \hookrightarrow A^\infty(W)$ from \thref{LEM:cptemb}, we conclude that $\Theta^{1/2M}$ is cyclic in $A^\infty(W)$ for large enough integers $M>1$. Since $\Theta$ is bounded, we may invoke \thref{LEM:bddcyc} to deduce that $\Theta$ is cyclic in $A^\infty(W)$. A similar argument also allows us to dispense the initial assumption that $\mu(\T)\leq 1$. The proof is now complete. \qedhere

\end{proof}

\newpage 
\bibliographystyle{siam}
\bibliography{mybib}

\Addresses

\end{document}